\newtheorem{Theorem}{Theorem}
\newtheorem{Conjecture}[Theorem]{Conjecture}
\newtheorem{theorem}{Theorem}[section]
\newtheorem{proposition}[theorem]{Proposition}
\theoremstyle{definition}
\newtheorem{definition}[theorem]{Definition} 
\newtheorem{example}[theorem]{Example} 
\theoremstyle{remark}
\newtheorem{remark}[theorem]{Remark}
\numberwithin{equation}{section}
\newcommand{\ot}{\otimes}
\newcommand{\ra}{\rightarrow}
\newcommand{\BC}{\mathbb{C}}
\newcommand{\BR}{\mathbb{R}}
\def\del{\partial}
\def\del{\partial}
 \def\l@subsection{\@tocline{2}{0pt}{4pc}{6pc}{}}
\def\l@subsubsection{\@tocline{3}{0pt}{8pc}{8pc}{}}
\begin{document}

\title{A new way to prove  configuration reducibility using gauge theory}

\thanks{}

\author{Scott Baldridge}
\address{Department of Mathematics, Louisiana State University\\
Baton Rouge, LA}
\email{baldridge@math.lsu.edu}

\author{Ben McCarty}
\address{Department of Mathematical Sciences, University of Memphis\\
Memphis, TN}
\email{ben.mccarty@memphis.edu}

\subjclass{}
\date{}

 \dedicatory{Dedicated to Sir Roger Penrose and Louis Kauffman for encouraging us to work on the four color theorem.}

\begin{abstract} We show how ideas coming out of gauge theory can be used to prove  configurations in the list of ``633 unavoidable configurations" are reducible. In this paper, we prove the smallest nontrivial example, the Birkhoff diamond, is reducible using our filtered $3$- and $4$-color homology. This is a new proof of a 111-year-old result that is a direct consequence of a special (2+1)-dimensional topological quantum field theory.  As part of the proof, we introduce the idea of  a state-reducible configuration. Because state-reducibility does not involve Kempe switches, this leads to an independent way to verify the proof of the four color theorem. We conjecture that  these gauge theoretic ideas  could also lead to a non-computer-based proof of it.
\end{abstract}

\maketitle

\section{Introduction}

In the past 10 years there have been four attempts to prove the four color theorem using gauge theory. These techniques were initiated by Kronheimer and Mrowka in their seminal papers on instanton homology of webs \cite{KM1, KM2, KM3}, which further inspired Khovanov and Robert's combinatorial foam evaluations \cite{KR}.  A student of the first author, Amit Kumar, has also recently produced a topological field theory with defects approach to the problem \cite{Kumar}.  The authors of this paper introduced {\em bigraded} and {\em filtered $n$-color homology} in \cite{BMcC}, which has its origins in $2$-factor homology in a paper by the first author \cite{BaldCohomology}.  Each of these four approaches are powerful in their own right and each emerged from studying different aspects of gauge theory.  

Until now, the best that these approaches could do was state an equivalence, i.e., ``The four color theorem is true if and only if  Theorem X is true.'' Unfortunately, the ``Theorem X'' in each theory is potentially as hard to prove as the four color theorem itself. In our case, the theorem is a statement about the non-vanishing of the  $E_\infty$ page of a spectral sequence (see Theorem C and Theorem F of \cite{BMcC}). The proof starts with an already non-vanishing  Khovanov-like homology theory, our bigraded $4$-color homology (the $E_1$ page), and then one needs to show that  the filtered $4$-color homology  (the $E_\infty$ page) does not vanish for all bridgeless planar graphs. 

To embolden a case for a non-computer-based proof, we prove that the smallest of the unavoidable  configurations -- the Birkhoff diamond -- is reducible using our filtered $n$-color homologies.  Since the Birkhoff diamond is historically and still-today the unavoidable configuration that  ``proves the rule,'' proving it reducible means that the $E_\infty$ page of our spectral sequence should be powerful enough to prove the remaining  unavoidable configurations are also reducible, leading to a new proof of the four color theorem. 

This has important consequences to gauge theorists' attempts at proving the four color theorem. Each group has been separately concentrating on their ``Theorem X''  with the idea to make progress on it outside of the accomplishments of notable mathematicians like Kempe, Heawood, Wernicke, Birkhoff, Franklin, Whitney, Lebesgue, and Heesch -- work that led up to  Appel and Haken's proof (cf. \cite{Wilson, Wilson4CS, WWP} for history of this problem). Instead, for the purposes of this paper, we stay within this well-established framework. In particular, we build off of the proof by Robertson, Sanders, Seymour, and Thomas  that uses $32$ discharging rules to build an unavoidable set of $633$ configurations \cite{RSST}. 

All three current valid proofs of the four color theorem  \cite{AppelHaken,AppelHaken2,RSST,Stromquist} break up into two steps: (1) build a list of unavoidable configurations and (2) show that each one of them cannot show up as a configuration in a minimal counter example. The contribution of this paper, based upon \cite{BaldCohomology} and \cite{BMcC}, is a completely new way to prove step (2). Since our method involves new techniques from our homology theories, the method can be used to attack smaller configurations than the Birkhoff diamond and thus possibly reduce the currently smallest unavoidable set of 633 configurations significantly down to human computable (see \Cref{Conjecture:Kempe-state-reducible}).   As Wilson explained in \cite{Wilson}, 

\begin{quote}\it
``For a non-computer proof, new ideas are needed, and such ideas have not yet been forthcoming.'' 
\end{quote}
The hope is that the ideas of this paper will accelerate the exploration of the four color problem using {\em all}  four gauge theory approaches  above.

The two main features of our method are (1) a (quantum) state system of orientable and non-orientable, often-higher-genus  surfaces that have the graph embedded into each of them (based on \cite{BaldCohomology}), and (2) the notion of state-reducibility within that system (based on \cite{BMcC}, see also \cite{BM-Vertex}).  The key distinction between our method and the standard method of proving the four color theorem is that our proof  \emph{never invokes the famous ``Kempe switch'' to prove reducibility}.

Recall the definitions of a Kempe switch and reducibility. First, suppose a planar trivalent graph $\Gamma$ has a certain configuration (a set of connected faces). Let a new graph $\Gamma'$ be constructed by removing this configuration and replacing it with a smaller configuration. Suppose that the new graph can be $4$-face colored. The smaller replacement configuration and graph are both called a {\em reducer} for $\Gamma$.  If the reducer is also trivalent and planar, then it is called  {\em Type $C$}. If it is a vertex, then it is called {\em Type $D$} (see \Cref{fig:BirkhoffTypeCTypeD} for examples). If a $4$-face coloring of $\Gamma'$ can be extended into the original configuration to get a $4$-face coloring of the original graph $\Gamma$, then the coloring is called {\em extendible}.  Bigons and triangles  are well known examples of configurations for which every $4$-face coloring is extendible \cite{Kempe}. 

\begin{figure}[h]
\includegraphics[scale=0.30]{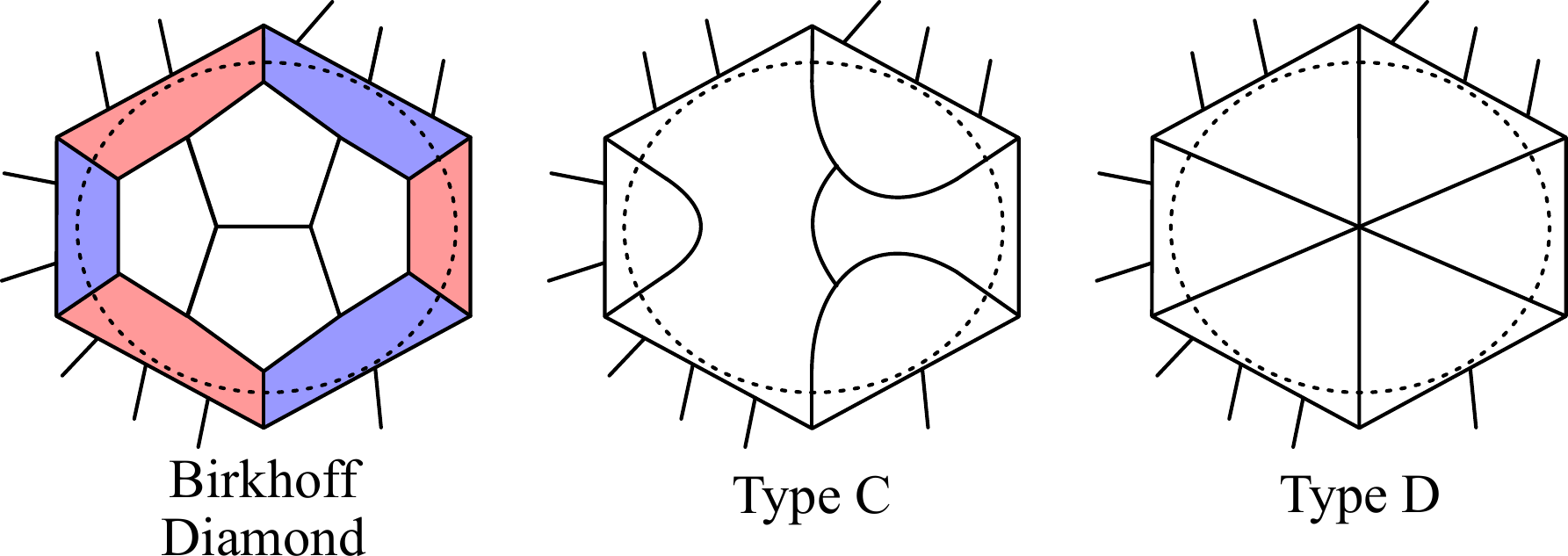}
\caption{The Birkhoff diamond inside an outer ring of six colored faces together with examples of Type $C$ and Type $D$ reducers for it.}
\label{fig:BirkhoffTypeCTypeD}
\end{figure}

Sometimes a coloring of $\Gamma'$ can only be extended to the outer ring of faces of the configuration in $\Gamma$ but fails to be extendible into the configuration (see the ring of colored faces in \Cref{fig:BirkhoffTypeCTypeD} for an example).  Suppose there is such a coloring where the configuration faces are left uncolored. For that coloring, assume there is a maximal connected set of  colored faces using only two colors from the coloring and that at least one of these faces is in the outer ring. A {\em Kempe switch} interchanges these two colors of this maximal set of faces to get a new coloring  \cite{MT}. Sometimes, if the Kempe switch is chosen carefully, this new coloring can then be extended to a $4$-face coloring of the original graph. A quadrilateral using a Type $D$ replacement is an example of a configuration that is only extendible after a Kempe switch: colorings that use all $4$ colors around the ring are not directly extendible. A {\em reducible configuration} is a configuration for which every coloring is extendible, either directly or after a succession of Kempe switches.  Note that a reducible configuration cannot appear in a minimal counterexample to the four color theorem. 

Bigons, triangles, and quadrilaterals are all considered trivially reducible configurations.  Kempe showed, using a simple Euler characteristic argument, that at least one of a bigon, triangle, quadrilateral, or a pentagon face must appear at least once in a planar trivalent graph \cite{Kempe}.  This set of four polygons became the first {\em unavoidable set} (cf. \cite{Wilson}).  Kempe thought he proved that the pentagon was reducible, and thus claimed to have proven the four color theorem, but eleven years later Heawood found a critical mistake in his argument \cite{Heawood}. Early attempts at the four color theorem replaced the nontrivial pentagon face with more configurations, each with more faces, that were still unavoidable \cite{Wernicke, Franklin}.  However, these configurations, which include examples like two adjacent pentagons and three pairwise-adjacent pentagons, are impossible to prove reducible using Kempe switches.  In 1913, Birkhoff introduced and proved the first nontrivial configuration (see \Cref{fig:BirkhoffTypeCTypeD}) that was reducible using Kempe switches:

\begin{theorem}[Birkhoff \cite{Birkhoff}] The Birkhoff diamond is a reducible configuration.
\end{theorem}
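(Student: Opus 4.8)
The plan is to run the classical reducibility argument via Kempe chains. Suppose, for contradiction, that the Birkhoff diamond appears in a minimal counterexample to the four color theorem, i.e.\ a bridgeless planar trivalent graph $\Gamma$ admitting no $4$-face coloring, while every bridgeless planar trivalent graph with fewer faces does admit one. Replace the four interior pentagonal faces of the diamond by a smaller configuration (a Type $C$ or Type $D$ reducer), obtaining a strictly smaller graph $\Gamma'$; by minimality $\Gamma'$ is $4$-face colorable, and any such coloring restricts to a proper coloring of the ring $R$ of six faces surrounding the diamond (consecutive ring faces being adjacent, so $R$ carries a proper $4$-coloring of a $6$-cycle). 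Thus the diamond is reducible precisely if every proper $4$-coloring of $R$ arising this way can be extended to the four interior faces after a (possibly empty) sequence of Kempe switches performed in the exterior.

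First I would enumerate the ring colorings. A proper $4$-coloring of a $6$-cycle is one of $3^6 + 3 = 732$ sequences; modulo the $S_4$ action on colors and the symmetry group of the diamond acting on $R$, this collapses to a short finite list (a few dozen cases). For each class I would attempt a \emph{direct} extension: color the four interior pentagons consistently with the given ring coloring and with one another. A substantial fraction of the ring colorings are ``good,'' extending directly, and these are dispatched at once by exhibiting the interior coloring; the remaining ``bad'' ones require work.

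For a bad ring coloring I would use Kempe switches. Pick two colors $a,b$ occurring on $R$ and consider the $\{a,b\}$-colored subgraph of the exterior; its components are the $\{a,b\}$-Kempe chains, and switching $a\leftrightarrow b$ on a component meeting $R$ yields a new ring coloring. The one place planarity enters is a Jordan-curve argument: two Kempe chains in disjoint color pairs that each join a prescribed pair of ring faces cannot both occur, since $R$ separates the plane and the chains would be forced to cross. Hence for each bad ring coloring one can always locate a switch, or a short succession of switches, landing in the good set; doing this case by case, and checking the procedure terminates rather than cycling, shows every ring coloring extends. Therefore the diamond cannot sit in a minimal counterexample, and it is reducible.

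The main obstacle is exactly the case analysis for the bad colorings: one must organize the thirty-odd ring colorings, correctly decide which are good, and for each bad one produce a Kempe switch whose effect on $R$ is predictable even though the exterior graph $\Gamma'$ is unknown. The planarity/linking argument that rules out simultaneous obstructing Kempe chains is what makes such a switch available, but tracking which switches exist and that iterating them reaches a good coloring is the delicate bookkeeping — and it is precisely this Kempe-switch bookkeeping that the state-reducibility method of this paper is designed to avoid.
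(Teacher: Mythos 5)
Your proposal reproduces Birkhoff's original 1913 argument: assume the diamond sits in a minimal counterexample, pass to a smaller graph by replacing the configuration with a reducer, enumerate the proper $4$-colorings of the six-face ring that the reducer's coloring induces, sort them into ``good'' (directly extendible to the four interior pentagons) and ``bad,'' and for each bad ring coloring locate a Kempe chain whose switching lands in the good set, using a Jordan-curve/planarity argument to rule out obstructing chains in two disjoint color pairs. That is a correct outline of the classical Kempe-chain proof (the ``delicate bookkeeping'' you flag is exactly the content of Birkhoff's case analysis), but it is deliberately the opposite of the route taken in this paper. Here the result is obtained by combining \Cref{Theorem:State-Reducible} with \Cref{Theorem:Main}: instead of $4$-colorings of the ring, one works with $3$-face colorings on the \emph{state graphs} of the reducer (ribbon surfaces obtained by replacing some edges with half-twisted bands), shows every such coloring is \emph{state-extendible} to a $4$-face coloring on some state graph of the original graph, and then uses \Cref{theorem:4-to-3-face-coloring} and \Cref{theorem:4-face-to-4-face} to pass back to an honest planar $4$-face coloring. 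No Kempe switch is ever invoked; the role your chain-tracking plays is replaced by a finite enumeration of ``planar caps'' (the $34{,}179$ ways six arcs can exit and re-enter the configuration region) and a computer check that each colorable capped state of the reducer state-extends to a $4$-colorable capped state of the diamond. The tradeoff is instructive: Birkhoff's method is hand-verifiable for this one configuration but, as the paper notes, Kempe switches provably cannot handle smaller configurations like a single pentagon; state-reducibility currently needs a computer for the Birkhoff diamond, but because it sidesteps Kempe switches it both yields an independent verification of reducibility in the four color theorem and opens the possibility (\Cref{Conjecture:Kempe-state-reducible}) of attacking configurations that the classical method cannot reach.
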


Since then, proofs of reducibility in all valid proofs of the four color theorem have used Kempe switches. Our idea is to replace the Kempe switch with a state system based upon a topological quantum field theory (cf. Section 9 of \cite{BMcC}) and then show how to extend solutions  (the colors) of a discrete ``color Dirac operator'' on the reducer of the graph to solutions on the original graph. We briefly describe this state system next; it is explained in detail in \cite{BMcC}.

Let $\Gamma$ be a trivalent plane graph of a graph $G(V,E)$ with vertices $V$ and edges $E$. This is a CW structure for $S^2$ whose $1$-skeleton is the graph.  Remove a small open disk from each $2$-cell of $\Gamma$ to get a ribbon graph (cf. \cite{BMcC} or \cite{Moffat2013}). This ribbon graph can be thought of as gluing closed ribbons (edges) to parts of boundaries of closed disks (vertices) so that the space is homeomorphic to the original ribbon graph.  For a planar graph, this can be done so that the disks and ribbons embed into the plane.  Each face corresponds to the boundaries of the ribbon graph and the ribbon graph can be described by these boundary components, see column $0$ of \Cref{fig:ThetaCube} for an example of the boundary outline of a ribbon theta graph.

Label and order the edges $\{e_1, \ldots, e_{|E|}\}$.  For an embeddable CW structure of a ribbon graph $\Gamma$ in the plane, construct a new ribbon graph as follows. Let  $\alpha \in \{0,1\}^{|E|}$ such that $\alpha=(\alpha_1,\ldots, \alpha_{|E|})$. For every $1\leq i \leq |E|$, if $\alpha_i=1$, remove the ribbon corresponding to edge $e_i$ and replace it with a ribbon with a half-twist. If $\alpha_i=0$, then do nothing.  The resulting ribbon graph is labeled $\Gamma_\alpha$ and called a {\em state graph} (cf. Section 6.6 of \cite{BMcC}).   These $2^{|E|}$ ribbon graphs can be arranged into a hypercube with an edge between two ribbon graphs if and only if the $\alpha_i$'s differ for one $i$. An example of the theta graph hypercube is displayed in \Cref{fig:ThetaCube} where the immersed circles represent the boundaries of the faces. One can glue disks (faces) to the boundaries of $\Gamma_\alpha$ to get an associated closed surface, which will still be called  $\Gamma_\alpha$. Sometimes this surface is non-orientable. The original graph $G(V,E)$ embeds into each state graph $\Gamma_\alpha$ as the $1$-skeleton of the CW structure. A (proper) {\em $n$-face coloring} of $\Gamma_\alpha$ is a coloring of the circles (boundary of faces) with $n$ distinct colorings such that no two faces adjacent along an edge share the same color.

\begin{figure}[H]
\includegraphics[scale=0.30]{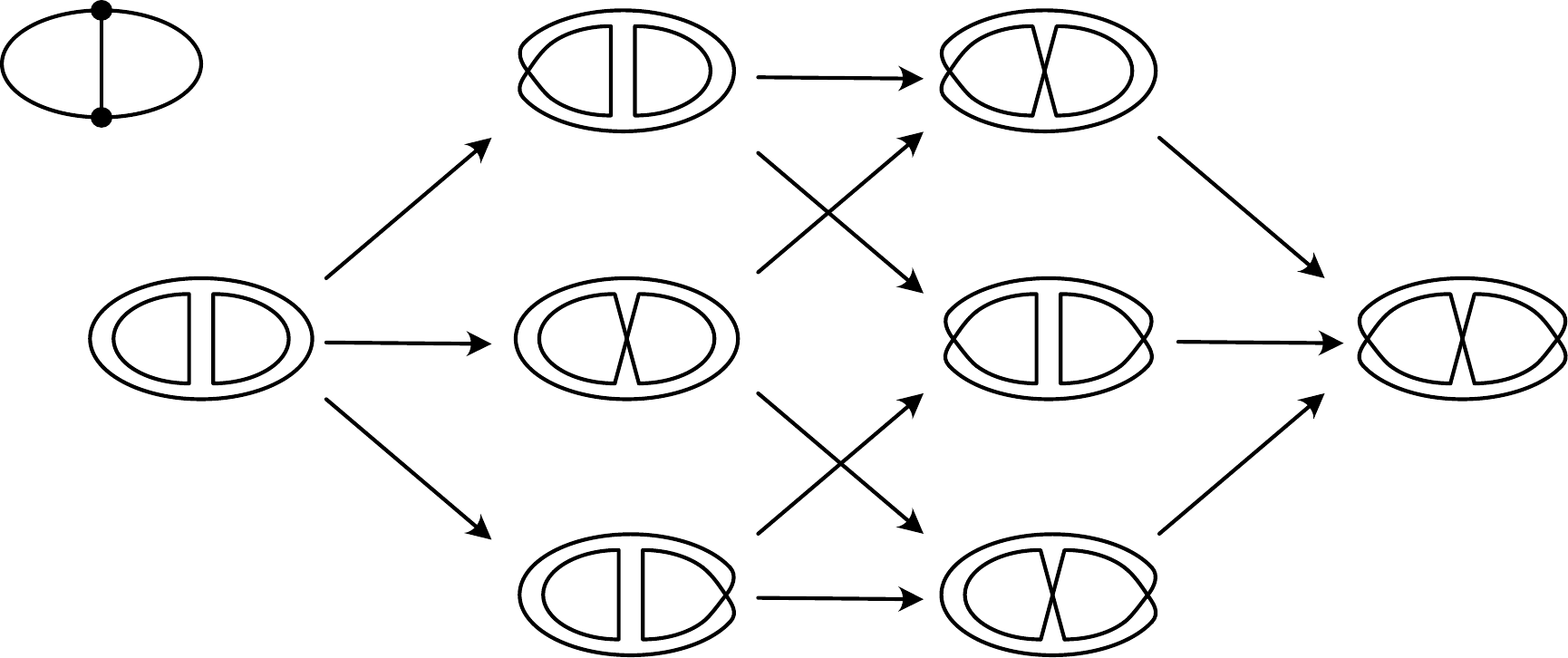}
\caption{The hypercube of states of the theta graph. Columns are labeled by the number of half-twists, $|\alpha|=\alpha_1+\cdots + \alpha_{|E|}$, therefore the first column is column 0, the second is column 1, etc., and represent the degree of the homology. The closed surfaces in column 1 are $\BR P^2$'s, column 3 is a $T^2$.}
\label{fig:ThetaCube}
\end{figure}

For a given positive integer $n$, it was proven in \cite{BMcC}, Theorem D, that the filtered $n$-color homology $\widehat{CH}_n^i(\Gamma)$ is a vector space generated by all possible $n$-face colorings on all state graphs $\Gamma_\alpha$ such that $|\alpha|=i$, i.e., it is generated by $n$-face colorings on all ribbon graphs associated to $\Gamma$ with the same number of half-twists in them (see the columns in \Cref{fig:ThetaCube}). In that paper, these colorings are derived from solutions of a Hodge-like Dirac operator on the complex of state graphs.   

One can think of this homology theory, then, as a state system generated by ``enhanced states,'' that is, ordered pairs $(\Gamma_\alpha, c)$ where $c$ is an $n$-face coloring of the state graph $\Gamma_\alpha$. Using enhanced states, we can give a definition of state-extendible colorings of a graph from its reducer. Thus, while the proofs in this paper all use the filtered $n$-color homologies of \cite{BMcC} to get  correspondences between solutions of a Dirac operator and $n$-face colorings, we can and will ``hide'' this homology theory here except only to mention what the gluing theorems look like in \Cref{theorem:gluing-map}.  Thus, we speak only of graphs and colorings on graphs to state and prove our main theorems. 

To setup these definitions, let $\Gamma$ be a plane graph of a bridgeless trivalent connected planar graph $G(V,E)$ and $C$ be a configuration in $\Gamma$.  Let $C'$ be a configuration that can replace $C$ in $\Gamma$  to get a new plane graph $\Gamma'$ that is also connected  and trivalent. If the sum of vertices and edges of $\Gamma'$ is less than the sum of vertices and edges of $\Gamma$, then $\Gamma'$ is a {\em reducer} of $\Gamma$. The Type $C$ configuration in \Cref{fig:BirkhoffTypeCTypeD} is an example of a reducer of the Birkhoff diamond. In fact, it is {\em the} reducer  we will use in this paper.

\begin{definition}
Suppose that $(\Gamma'_\alpha, c')$ is an enhanced state of a reducer $\Gamma'$ where $c'$ is an $(n-1)$-face coloring of a state graph $\Gamma'_\alpha$ of $\Gamma'$. The enhanced state is called {\em state-extendible} if there exists an enhanced state $(\Gamma_\beta, c)$ of the original graph $\Gamma$ where $c$ is an $n$-face coloring of $\Gamma_\beta$ and both states are the same outside of their configurations.
\end{definition}

State-extensions are likely: even though an $(n-1)$-face coloring on $\Gamma
'_{\alpha'}$ may not directly extend to an $(n-1)$-face coloring on some $\Gamma_\beta$, with one extra color, it often does.  It is also more likely to extend since there are many more states on $\Gamma$ than $\Gamma'$ that match on the complement of the configurations. It should also be noted that an extension does not, in general, correspond to a Kempe switch in any of its forms. There are obvious notions of Kempe switches for a given $n$-face coloring of a surface, but these notions are not needed for this paper. They will become important in future research (see \Cref{Conjecture:Kempe-state-reducible}).

The definition of state-extendible is general once a state system is understood for nonplanar ribbon graphs (see \cite{BMcC}). Of course, this paper is concerned with the planar, $n=4$ case:

\begin{definition}
Let $C'$ be a reducer for a configuration $C$ of $\Gamma$. Suppose that the set of enhanced states $(\Gamma'_\alpha,c')$ of $3$-face colorings for $\Gamma'$ is nonempty. If every enhanced state from this set is state-extendible to $(\Gamma_\beta, c)$ where $c$ is a  $4$-face coloring on some state graph $\Gamma_\beta$ of $\Gamma$, then the configuration $C$ is said to be {\em state-reducible} with reducer $C'$.
\end{definition}

When comparing this definition to the original definition of reducibility, this definition analogous to  {\em directly} extendible -- no succession of ``Kempe switches'' are introduced or needed. Equipped with these definitions, we can state the main theorem of this paper:

\medskip
\begin{Theorem} \label{Theorem:State-Reducible}
Let $\Gamma$ be a connected plane trivalent graph. Suppose that $\Gamma$ is state-reducible with reducer $\Gamma'$. If $\Gamma'$ is $4$-face colorable, then $\Gamma$ is $4$-face colorable.
\end{Theorem}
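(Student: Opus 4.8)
The plan is to run the whole argument inside the state system, so that the only non-formal input is a pair of translations between ordinary planar colorings and colorings of twisted state graphs; the theorem itself is then just a chain of implications. The translation that does the real work is a \emph{descent} statement: if \emph{some} state graph $\Gamma_\beta$ of a connected bridgeless plane trivalent graph $\Gamma$ admits a proper $4$-face coloring $c$, then $\Gamma$ itself (the state graph $\Gamma_0$) is $4$-face colorable. To prove this I would use that the half-twists all sit in the interiors of the ribbons, so near every vertex a state graph looks exactly like the planar graph. Identifying the four colors with the Klein four-group $V=(\BZ/2)^2$ (the standard device behind Tait's theorem), assign to each edge of $\Gamma$ the sum of the $c$-colors of the two faces of $\Gamma_\beta$ meeting it; at each vertex the three values so obtained are nonzero, pairwise distinct, and sum to $0$, so $c$ determines a nowhere-zero $V$-flow on the underlying graph $G(V,E)$. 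Since $\Gamma$ is planar, Tait's theorem together with planar duality converts this flow into a proper $V$-coloring of the faces of $\Gamma_0$, which is the desired $4$-face coloring of $\Gamma$. (One should dispatch the single degenerate possibility, that some edge of $\Gamma_\beta$ borders one face twice; this cannot occur once $\Gamma_\beta$ is \emph{properly} colorable and $\Gamma$ is bridgeless.)

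Granting this, the theorem is immediate. By hypothesis the set of enhanced states $(\Gamma'_\alpha,c')$ consisting of $3$-face colorings of state graphs of the reducer $\Gamma'$ is nonempty; fix one such $(\Gamma'_\alpha,c')$. (In fact this set is nonempty \emph{exactly} when $\Gamma'$ is $4$-face colorable: a $3$-face coloring of a state graph of $\Gamma'$ induces a proper $3$-edge coloring of the underlying graph — so $\Gamma'$ is bridgeless — which by Tait's theorem is a $4$-face coloring of the plane graph $\Gamma'$, and conversely the state construction of \cite{BMcC} turns a $3$-edge coloring of $\Gamma'$ into a $3$-face coloring of a suitable twisted state graph, the half-twists being chosen so that every face becomes bicolored by the edge coloring; this converse is the content of the gluing theorem, \Cref{theorem:gluing-map}. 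Thus the hypothesis that $\Gamma'$ is $4$-face colorable is already built into state-reducibility, and is stated separately only so that \Cref{Theorem:State-Reducible} reads as the exact analogue of classical reducibility.) Since $\Gamma$ is state-reducible with reducer $\Gamma'$, the enhanced state $(\Gamma'_\alpha,c')$ is state-extendible: there is an enhanced state $(\Gamma_\beta,c)$ of $\Gamma$, agreeing with $(\Gamma'_\alpha,c')$ outside the configurations, with $c$ a proper $4$-face coloring of a state graph $\Gamma_\beta$ of $\Gamma$. Applying the descent statement to $(\Gamma_\beta,c)$ shows $\Gamma$ is $4$-face colorable, as wanted.

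The step I expect to be the real obstacle lies not in this theorem but in the two translations, and especially in the forward one supplied by \cite{BMcC}: given a $3$-edge coloring of $\Gamma'$ one must pin down exactly which faces of $\Gamma'_0$ fail to be bicolored by it, check that half-twisting an appropriate set of edges merges precisely those faces into bicolored ones while destroying bicoloredness nowhere else and creating no self-adjacent face, and then match the resulting enhanced state with the one the gluing maps actually produce. Inside the present paper, however, the proof of \Cref{Theorem:State-Reducible} really is the formal assembly above; the substantive work for the Birkhoff diamond is instead the verification that its Type~$C$ reducer makes it state-reducible — that \emph{every} $3$-face coloring of \emph{every} state graph of $\Gamma'$ extends across the configuration to a $4$-face coloring of a state graph of $\Gamma$ — and it is there that the gluing theorem, rather than a Kempe switch, does the extending.
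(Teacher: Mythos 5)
Your proof is correct and follows the same overall strategy as the paper: pass from a coloring of the reducer to a $3$-face coloring of a state graph, invoke state-reducibility to obtain a $4$-face coloring on some state graph $\Gamma_\beta$ of $\Gamma$, and then ``descend'' to a $4$-face coloring of the plane graph $\Gamma$ itself. The paper formalizes this as a three-arrow diagram whose left arrow is \Cref{theorem:4-to-3-face-coloring}, whose right arrow is the definition of state-reducibility, and whose bottom arrow is \Cref{theorem:4-face-to-4-face}; both of the outer arrows are cited from the filtered $n$-color homology machinery of \cite{BMcC}.

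The useful differences in your write-up are worth recording. First, rather than cite \Cref{theorem:4-face-to-4-face} you reprove the descent step from scratch: a proper $4$-face coloring of any state graph $\Gamma_\beta$ yields, via the Klein four-group trick, a nowhere-zero $(\BZ/2)^2$-labelling of the edges of the abstract trivalent graph $G$ (properness at a vertex gives the three labels nonzero, pairwise distinct, summing to $0$), and then Tait's theorem with planar duality converts this to a $4$-face coloring of the plane graph $\Gamma$. This is a more elementary and self-contained argument than the paper gives (the paper defers to Theorems~F.2, D.1, D.2 and ultimately Theorem~8.5 of \cite{BMcC}, itself ``a generalization of Tait's original argument''), and it makes clear that the descent step needs only classical combinatorics, not the full spectral-sequence framework. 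Second, you correctly observe that the hypothesis ``$\Gamma'$ is $4$-face colorable'' is already forced by state-reducibility, since the definition of state-reducible presupposes the set of $3$-face-colored enhanced states of $\Gamma'$ to be nonempty, and nonemptiness is equivalent to $4$-face colorability of $\Gamma'$ by \Cref{theorem:4-to-3-face-coloring}; the paper instead uses that hypothesis explicitly as the top-left entry of its diagram, so its top arrow is not redundant in the way the argument is organized there. One small misattribution: you credit the equivalence ``$\Gamma'$ $4$-face colorable $\Leftrightarrow$ some state graph of $\Gamma'$ has a $3$-face coloring'' to the gluing theorem (\Cref{theorem:gluing-map}), but in the paper that equivalence is exactly \Cref{theorem:4-to-3-face-coloring} (resting on the Poincar\'e Lemma of \cite{BMcC} and Tait); \Cref{theorem:gluing-map} plays no role in the proof of \Cref{Theorem:State-Reducible} and is only sketched for future work.
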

\medskip

State-reducibility is our ``new idea'' in Wilson's quote above.  Note that there is an interplay between $4$-face colorings on the planar trivalent graph of the reducer and the existence of $3$-face colorings on higher genus state graph surfaces of the reducer. There is also a relationship between $4$-face colorings on the state graphs and $4$-face colorings on the original graph. These results make up the contents of \Cref{theorem:4-to-3-face-coloring} and \Cref{theorem:4-face-to-4-face}, both of which are derived from the filtered $n$-color homology theories in \cite{BMcC}. 

To justify our claim that this new idea is useful, we must also show that \Cref{Theorem:State-Reducible} can be used in at least one, nontrivial,  example.  As an application of \Cref{Theorem:State-Reducible}, we prove that the Birkhoff diamond is state-reducible:

\begin{Theorem} \label{Theorem:Main} The Birkhoff diamond,
\parbox[m]{2cm}{\includegraphics[scale=0.20]{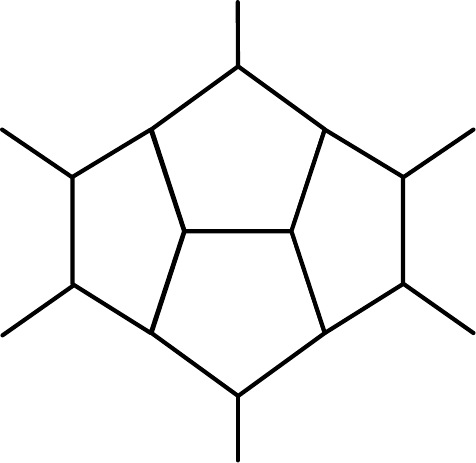}},
is state-reducible with  reducer,
\parbox[m]{2cm}{\includegraphics[scale=0.20]{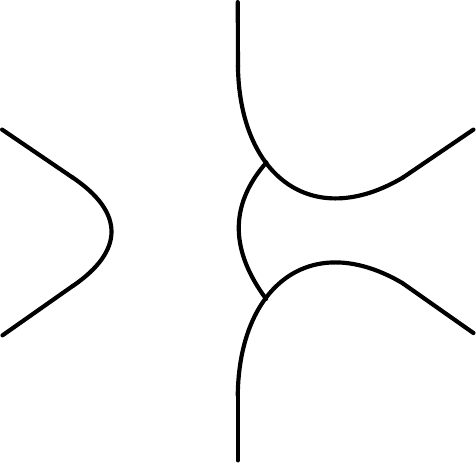}}.
\end{Theorem}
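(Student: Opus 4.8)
The plan is to verify, directly and by a finite local computation, the two requirements in the definition of state-reducibility: that the reducer $\Gamma'$ admits at least one enhanced state given by a $3$-face coloring of a state graph, and that every such enhanced state is state-extendible to a $4$-face colored state graph of the Birkhoff diamond graph $\Gamma$. Since state-extendibility only constrains an enhanced state outside its configuration, and the Birkhoff diamond sits inside a fixed ring of six faces, I would first use the gluing theorem \Cref{theorem:gluing-map} to localize the whole question: the only data of an enhanced state that matters is the ``interface'' it induces near the configuration -- the coloring of the six ring faces together with the relevant half-twist pattern. State-extendibility of $(\Gamma'_\alpha,c')$ then amounts to producing a state graph $\Gamma_\beta$ of the diamond graph carrying a proper $4$-face coloring with the same interface.

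Next I would enumerate the enhanced $3$-colored states of the reducer. The planar reducer $\Gamma'$ is a small trivalent graph, so its proper $4$-face colorings can be listed by hand; exhibiting one of them gives nonemptiness, and feeding the list through \Cref{theorem:4-to-3-face-coloring} identifies the enhanced states $(\Gamma'_\alpha,c')$ of $3$-face colorings with data built from the $4$-face colorings of the planar graph $\Gamma'$. Restricting to the interface and then collapsing the list by the dihedral symmetry of the diamond-in-a-ring picture leaves a short list $\mathcal{I}'$ of interface colorings to be matched.

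The heart of the proof is the matching step: for each interface in $\mathcal{I}'$ I would write down an explicit state graph $\Gamma_\beta$ of the Birkhoff diamond and an explicit proper $4$-face coloring on it realizing that interface. The interfaces that already extend to a proper $4$-face coloring of the untwisted planar diamond are immediate; the remaining ones are exactly the ring colorings that classically require Kempe chains, and these are handled by twisting a carefully chosen set of edges interior to the diamond. The mechanism replacing the Kempe switch is that a half-twist reverses how a ribbon is glued, changing precisely the face adjacencies along that edge, so that a coloring that was improper on the planar diamond becomes a proper $4$-face coloring on the resulting (often non-orientable, higher-genus) state graph -- and the fourth color, unavailable on the $3$-colored reducer side, supplies the extra room needed inside the configuration while leaving the interface untouched.

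The step I expect to be the main obstacle is this last one: keeping the case analysis genuinely short and checkable, making the symmetry reduction rigorous, and verifying that each $4$-face coloring written on a twisted, non-orientable surface is proper in the state-graph sense (no two faces sharing an edge of $\Gamma_\beta$ receive the same color). Everything else -- nonemptiness, the reduction to the ring, and the translation between colorings on state graphs and colorings on the planar graphs -- is bookkeeping on top of the correspondence theorems \Cref{theorem:4-to-3-face-coloring} and \Cref{theorem:4-face-to-4-face} of \cite{BMcC}.
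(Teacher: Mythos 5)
Your plan diverges from the paper's at exactly the point you flag as ``the main obstacle,'' and the divergence is fatal to the plan as stated. You propose to localize via the gluing theorem and reduce the problem to matching a short list $\mathcal{I}'$ of ``interface colorings,'' where an interface is ``the coloring of the six ring faces together with the relevant half-twist pattern.'' This is not the right interface. In a state graph $\Gamma'_\alpha$ there is in general no ring of six distinct faces around the configuration: the boundary circles can exit one spoke and re-enter at a \emph{different} spoke, can merge several spokes into a single circle, and can interact with each other (via clasps/buckles coming from half-twists) arbitrarily far from the configuration. The correct localization datum, worked out in Section~\ref{section:taking-the-cap-of-a-state}, is the \emph{planar cap}: a finite combinatorial object recording both the permutation by which the six arcs connect the twelve spoke endpoints and the diamond interactions among those arcs. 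Proving that this datum is the right one (i.e., that circles wholly outside the configuration can be discarded, and that the cap is finite and planar) is the content of \Cref{theorem:cap-map}, which your plan has no replacement for. Concretely, $\mathcal{P}_6$ has $130$ basic caps and $34{,}179$ planar caps, of which $3{,}744$ support a compatible $3$-face coloring giving $7{,}210$ colored caps; no dihedral symmetry reduction brings this down to a hand-checkable list, and indeed the paper's verification is a computer search (\Cref{proposition:3-face-to-4-face-Birkhoff}).

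Two further points. First, you lean on \Cref{theorem:gluing-map} to do the localization, but the paper explicitly defers that theorem (and the relative homology it requires) to future work and instead argues entirely at the level of enhanced states and caps; so as written your proof would rest on an unproved lemma. Second, your description of where the fourth color goes --- ``supplies the extra room needed inside the configuration while leaving the interface untouched'' --- is not what happens: in \Cref{proposition:3-face-to-4-face-Birkhoff} the fourth color is applied to up to two \emph{arcs of the cap}, i.e.\ circles that run through the configuration and out into the complement, so the coloring restricted to the cap genuinely changes. This matters because the planarity/interaction structure of the cap is exactly what controls whether such a recoloring is legal, which is another reason the cap, not the six ring colors, is the correct object to enumerate. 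The mechanism you describe for why half-twists replace Kempe switches is right in spirit; what is missing is the apparatus (caps, the $cap$ map, the capped-state construction $P\sharp BD_\beta$) that turns that intuition into a finite, complete case analysis.
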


\Cref{Theorem:State-Reducible} and  \Cref{Theorem:Main} combine to form the first new proof that the Birkhoff's diamond is reducible (in the original sense) in 111 years.

While \Cref{Theorem:State-Reducible} is the theory behind this application,  \Cref{Theorem:Main} is based upon computations on a supercomputer. At this point, we could do the same analysis outlined in \Cref{proposition:3-face-to-4-face-Birkhoff}  on the remaining $632$ unavoidable configurations using \Cref{Theorem:State-Reducible} and thereby complete the proof of the four color theorem using filtered $3$- and $4$-color homology. This would take planning, access to a more powerful supercomputer, and work on making our current program more efficient.  But nothing new in terms of theory would be needed.

However, there is a much more enticing research program to pursue using our new techniques.  The Birkhoff diamond is the smallest configuration in every valid proof of the four color theorem. In fact, it has been shown that one {\em cannot prove} configurations that are smaller, like a single pentagonal face, are reducible in the original sense, mainly because there are no Kempe switches for particular colorings (see \cite{AppelHaken0}, who cite \cite{Bernhart} and \cite{Heesch}, and ultimately \cite{Birkhoff} for how this is proven).  Since state-reducibility does not use Kempe switches to prove the smallest nontrivial configuration reducible, we expect it to be {\em applicable to smaller configurations} by including Kempe  switches, which allow far more flexibility in showing extendibility. Call a configuration {\em Kempe state-reducible} if every $3$-face coloring on a state graph of the reducer is state-extendible to a $4$-face coloring on some state graph of the original, either directly as in \Cref{Theorem:Main} or by a succession of Kempe switches on colorings on state graph surfaces. We conjecture:

\begin{Conjecture}\label{Conjecture:Kempe-state-reducible}
At least one of the following configurations is Kempe state-reducible: a single pentagon, a single hexagon, two adjacent pentagons, or three pairwise-adjacent pentagons.  
\end{Conjecture}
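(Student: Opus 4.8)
The plan is to carry out, for at least one of the four listed configurations, the same kind of analysis used to prove \Cref{Theorem:Main}, but with the notion of state-extendibility enlarged to allow Kempe switches on state graph surfaces; since the statement is a conjecture, what follows is a research program rather than a complete argument. First I would fix, for each candidate configuration, a concrete candidate reducer $C'$ of the kind used in the classical reducibility attempts (a small Type $C$ or Type $D$ configuration in the sense of \Cref{fig:BirkhoffTypeCTypeD}), chosen so that the resulting $\Gamma'$ is connected, trivalent, and $4$-face colorable, the latter being the hypothesis of \Cref{Theorem:State-Reducible}. One then enumerates, on a supercomputer, the finite set of enhanced states $(\Gamma'_\alpha, c')$ consisting of a state graph $\Gamma'_\alpha$ of the reducer (there are $2^{|E'|}$ of them, indexed by $\alpha \in \{0,1\}^{|E'|}$) together with a $3$-face coloring $c'$ of it. This mirrors the role of \Cref{proposition:3-face-to-4-face-Birkhoff} in the proof of \Cref{Theorem:Main}.

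Second, for each such $(\Gamma'_\alpha, c')$ I would first test \emph{direct} state-extendibility exactly as for the Birkhoff diamond: search over state graphs $\Gamma_\beta$ of the original configuration that agree with $\Gamma'_\alpha$ off the configuration, and over $4$-face colorings $c$ of $\Gamma_\beta$ that agree with $c'$ off the configuration, using \Cref{theorem:4-to-3-face-coloring} and \Cref{theorem:4-face-to-4-face} to move between the $3$- and $4$-color data and the gluing maps of \Cref{theorem:gluing-map} to control what happens on the complement of the configurations. Guided by the classical irreducibility results of Birkhoff, Bernhart, and Heesch (\cite{Birkhoff, Bernhart, Heesch}), one expects a small but nonempty set of ``bad'' colorings $c'$ that fail to extend directly. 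For these the third ingredient is needed: a Kempe-switch formalism for $4$-face colorings of the closed, possibly non-orientable, possibly higher-genus state graph surfaces. One must define such a switch (interchange two colors on a maximal two-colored region of $\Gamma_\beta$), check that it descends to enhanced states and is compatible with the gluing maps of \Cref{theorem:gluing-map}, and then verify the key phenomenon: that on a suitably chosen state graph the half-twists create Kempe chains unavailable in the plane. The heuristic is that the classical obstruction is essentially a planar one --- two disjoint Kempe chains separate $S^2$, so one cannot be recolored without disturbing the other --- and that passing to a state graph of positive genus or a non-orientable state graph destroys this separation, so a well-placed half-twist should open up a Kempe switch that renders the bad coloring extendible.

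The main obstacle, I expect, is twofold. The substantive half is the third ingredient above: one must actually \emph{prove}, not merely hope, that for every bad coloring $c'$ of at least one of the four configurations there is some state graph $\Gamma_\beta$ on which the obstructing Kempe chains merge or become switchable and the resulting coloring extends --- and this requires first pinning down the Kempe-switch formalism on non-orientable surfaces tightly enough that it fits the filtered $n$-color homology framework of \cite{BMcC}. The practical half is that, although the relevant space of pairs (state graph, coloring) is finite and the Kempe switches define a finite reachability graph on it, this space grows like $2^{|E|}$ in the original configuration, so even the computer-assisted version will demand a more powerful machine and a more efficient implementation than the one used for \Cref{Theorem:Main}. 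An attractive alternative to brute enumeration would be to recast Kempe state-reducibility as a statement about the gluing maps of \Cref{theorem:gluing-map} --- surjectivity, onto the relevant graded piece, of the map they induce from the $3$-color data of the reducer to the $4$-color data of the original --- which might be provable structurally. In any case the natural first target is the single pentagon with its classical reducer: it has the fewest colorings to analyze and the sharpest classical obstruction, so it is the cleanest test of whether the state system genuinely breaks the planar barrier. A positive answer there, combined with \Cref{Theorem:State-Reducible} and Kempe's unavoidability argument, would already collapse the smallest known unavoidable set of $633$ configurations dramatically.
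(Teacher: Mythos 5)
The statement you are addressing is a conjecture, and the paper does not prove it --- it offers only a motivating discussion and explicitly defers the needed tools (the gluing maps of \Cref{theorem:gluing-map} and a Kempe-switch formalism on state graph surfaces) to future work. You recognize this correctly and respond with a research program rather than a proof, and that program closely mirrors the paper's own sketch: fix a reducer, enumerate enhanced states and their $3$-face colorings, test direct state-extendibility as in \Cref{Theorem:Main}, and for the residual ``bad'' colorings introduce Kempe switches on closed, possibly non-orientable, possibly higher-genus state graph surfaces. Your heuristic --- that the classical obstruction is a planarity phenomenon (disjoint Kempe chains separating $S^2$) which positive genus or non-orientability can destroy --- is a reasonable and well-motivated gloss the paper does not spell out, and your alternative recasting in terms of surjectivity of the gluing maps of \Cref{theorem:gluing-map} is entirely in the spirit of \Cref{proposition:gluing-extension}.

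Two points of divergence from the paper's own discussion are worth flagging. First, the paper explicitly expects that the \emph{bigraded} $4$-color homology and the accompanying spectral sequence of \cite{BMcC} will be needed at this stage, not merely the filtered theory used for \Cref{Theorem:Main}; your proposal stays entirely in the filtered framework. Second, the paper's stated hope for the single-pentagon case is the opposite of a supercomputer problem: because $3$-face colorings on state graphs of the candidate reducers ``can be enumerated by hand, in fact, in a very short list,'' the authors anticipate a \emph{non-computer-based} proof, whereas you forecast that the $2^{|E|}$ growth will demand a more powerful machine. You do correctly single out the pentagon as the cleanest first target, so this is a difference of expected difficulty rather than of method. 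Since no proof exists to compare against, these are the right things to note; the substantive open problem you name --- defining and validating the Kempe-switch operation on non-orientable/higher-genus state graphs compatibly with the gluing theory --- is exactly the ``future research'' the paper identifies.
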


If the configuration of three pairwise-adjacent pentagons is Kempe state-reducible, then it cannot show up in a minimal counterexample, and $148$ of the $633$ unavoidable configurations in \cite{RSST} could be replaced by this configuration (since it shows up as a sub-configuration in them). This would be an almost 25 percent reduction in the size of the current smallest unavoidable set of reducible configurations. It would also open the door for proving that Franklin's unavoidable set of six nontrivial configurations are reducible (cf. \cite{Franklin}). Obviously, if the hexagon or double pentagon configurations were Kempe state-reducible, this would shrink the size even more drastically (cf. \cite{Wernicke}). Finally, if the pentagon was Kempe state-reducible, then this would complete Kempe's original proof of the four color theorem. It would be a non-computer-based proof since $3$-colorings on state graphs of  possible reducers can be enumerated by hand, in fact, in a very short list. We expect at this stage  one would need our bigraded theory and the spectral sequence of \cite{BMcC}. Building the tools (cf. \Cref{theorem:gluing-map}) needed for this project is future research.

\section{Proof of \Cref{Theorem:State-Reducible}}

\Cref{Theorem:State-Reducible} follows from \Cref{theorem:4-to-3-face-coloring} and \Cref{theorem:4-face-to-4-face} below,  as summarized by  \Cref{fig:TheoremAProofPlan}. Let $\Gamma$ be a connected trivalent plane graph containing some configuration $D$. Suppose that there is another,  trivalent plane configuration $D'$ that can be inserted into $\Gamma\setminus D$ to get $\Gamma'$. Suppose that $\Gamma'$ is $4$-face colorable and that $\Gamma$ is state-reducible with reducer $D'$. Then there exists a $4$-face coloring of $\Gamma$ (\Cref{Theorem:State-Reducible}) by following the clockwise path  in \Cref{fig:TheoremAProofPlan}:

\begin{figure}[h]
$$\begin{tikzpicture}[thick, scale = .8]
\draw (0,0) node {A $4$-face coloring on $\Gamma'$};
\draw (3,0) -- (6.2,0);
\draw (3,-0.16) -- (6.2,-0.16);
\draw (6.1,0.1)--(6.3,-0.08)--(6.1,-0.26);
\draw (4.5,.6) node {\Cref{theorem:4-to-3-face-coloring}};             
\node[text width = 4.7 cm] at (10,0) {A $3$-face coloring on some \\ state graph $\Gamma_{\alpha}'$ of $\Gamma'$};
\begin{scope}[yshift = -4cm]
\draw (0,0) node {A $4$-face coloring on $\Gamma$};
\begin{scope}[xscale=-1,xshift=-9.2cm]
\draw (3,0) -- (6.2,0);
\draw (3,-0.16) -- (6.2,-0.16);
\draw (6.1,0.1)--(6.3,-0.08)--(6.1,-0.26);
\end{scope}
\draw (4.5,.6) node {\Cref{theorem:4-face-to-4-face}};             
\node[text width = 4.8 cm] at (10,0) {A $4$-face coloring on a related state graph $\Gamma_{\beta}$ of $\Gamma$};
\end{scope}
\draw (0,-1) -- (0,-3);
\draw (0.16,-1) -- (0.16,-3);
\draw (-0.1,-2.9)--(0.08,-3.1)--(0.26,-2.9);
\draw (-2,-2) node {\bf \Cref{Theorem:State-Reducible}};  
\begin{scope}[xshift = 9cm]
\draw (0,-1) -- (0,-3);
\draw (0.16,-1) -- (0.16,-3);
\draw (-0.1,-2.9)--(0.08,-3.1)--(0.26,-2.9);
\draw (2,-2) node {\hspace{.6cm}\parbox[m]{3cm}{Definition of\\state-reducibilty}};  
\end{scope}
\end{tikzpicture}$$
\caption{A diagram of the proof of \Cref{Theorem:State-Reducible} for a plane graph $\Gamma$ with a configuration and its reducer $\Gamma'$ with a $4$-face coloring.}
\label{fig:TheoremAProofPlan}
\end{figure}
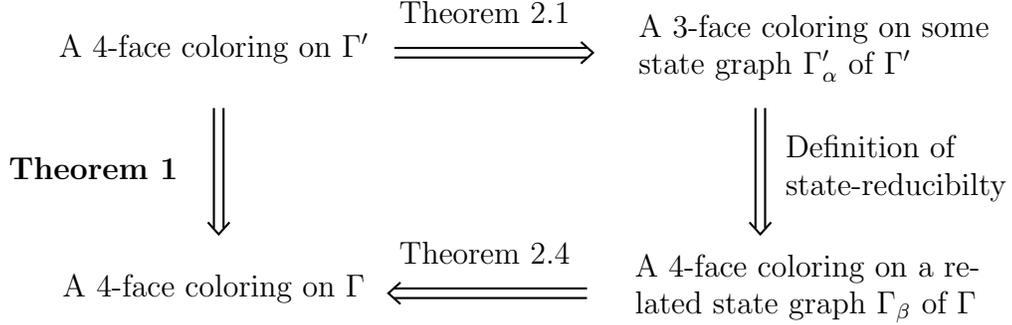

\begin{theorem}\label{theorem:4-to-3-face-coloring}
Let $\Gamma$ be a plane graph of a connected trivalent planar graph $G(V,E)$.  Then
$$ \#\left\{\mbox{$4$-face colorings of $\Gamma$}\right\} = 4\cdot \sum_{\alpha\in\{0,1\}^{|E|}} \# \left\{\mbox{$3$-face colorings of $\Gamma_\alpha$}\right\}.$$
In particular, every $4$-face coloring of $\Gamma$ corresponds in a $4$-to-$1$ way to a $3$-face coloring of some state graph $\Gamma_\alpha$ of $\Gamma$.
\end{theorem}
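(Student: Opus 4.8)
The plan is to prove the identity by reducing both sides to the number of proper $3$-edge colorings (Tait colorings) of $\Gamma$: first a classical $4$-to-$1$ surjection from $4$-face colorings of $\Gamma$ onto Tait colorings, then a bijection between Tait colorings and pairs $(\alpha,c)$ with $\alpha\in\{0,1\}^{|E|}$ and $c$ a $3$-face coloring of $\Gamma_\alpha$. Throughout, take the four face colors of $\Gamma$ to be the elements of the Klein four-group $K=\mathbb{Z}/2\times\mathbb{Z}/2=\{0,a,b,c\}$ and the three colors on state graphs to be $\{1,2,3\}$.

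\textbf{Step 1 (the factor of $4$).} Given a $4$-face coloring $\kappa$, define an edge-labeling by $\partial\kappa(e)=\kappa(f)+\kappa(f')$, where $f,f'$ are the two faces meeting along $e$; it lies in $K\setminus\{0\}$ because $f,f'$ get distinct colors. At a trivalent vertex the three edge-labels are nonzero elements of $K$ summing to $0$, and since a triple of nonzero elements of $K$ with a repeated entry sums to a nonzero element, the three labels must be the three distinct nonzero elements $a,b,c$; hence $\partial\kappa$ is a proper $3$-edge coloring. Conversely a proper $3$-edge coloring $\phi$ with labels in $K\setminus\{0\}$ is a $K$-tension of the dual graph $\Gamma^{*}$, since the cycle space of $\Gamma^{*}$ on $S^{2}$ is generated by its facial cycles, which correspond to the vertices of $\Gamma$ and at a trivalent vertex consist of the three incident edges, whose labels sum to $a+b+c=0$. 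So $\phi=\delta\kappa$ for a $K$-coloring $\kappa$ of $F(\Gamma)=V(\Gamma^{*})$, unique up to a global translation by $K$ because $\Gamma^{*}$ is connected, and $\kappa$ is proper since all labels are nonzero. Thus $\kappa\mapsto\partial\kappa$ surjects onto Tait colorings with every fiber a single $K$-orbit, of size $4$, giving $\#\{4\text{-face colorings of }\Gamma\}=4\cdot\#\{\text{Tait colorings of }\Gamma\}$.

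\textbf{Step 2 (the state sum).} It remains to give a bijection between the state-graph $3$-face colorings and the Tait colorings of $\Gamma$:
$$\bigsqcup_{\alpha\in\{0,1\}^{|E|}}\{3\text{-face colorings of }\Gamma_{\alpha}\}\;\longleftrightarrow\;\{\text{Tait colorings of }\Gamma\}.$$
I would build this on the ribbon graph. Every $\Gamma_{\alpha}$ has $1$-skeleton $G$, hence the same set of vertex-corners as $\Gamma=\Gamma_{(0,\dots,0)}$; inserting a half-twist into the ribbon of an edge $e$ only changes how the boundary traversal passes through $e$ — it swaps the two corners reached at one end of $e$ — and so merely regroups those corners into the boundary circles (faces) of $\Gamma_{\alpha}$. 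From $(\alpha,c)$ with $c$ a proper $3$-face coloring of $\Gamma_{\alpha}$, let $\phi(e)$ be the unique color not on the two faces of $\Gamma_{\alpha}$ along $e$ (those faces are distinct, else some face would be adjacent to itself). A local computation at a trivalent vertex $v$ shows its three corners get three distinct colors — any two of them are corners of faces adjacent along the edge between them — and that $\phi$ restricted to the three edges at $v$ takes those three values, with $\phi(e)$ equal at either end of $e$ to the color of the corner opposite $e$; so $\phi$ is a Tait coloring. Conversely, from a Tait coloring $\phi$ this last formula defines a coloring $\widetilde c$ of the corners of $G$; at each edge $e$ exactly one value of $\alpha_{e}$ makes the boundary passage through $e$ join corners of equal $\widetilde c$-color (at either end of $e$ the two corners realize, bijectively, the two colors $\neq\phi(e)$), and for that unique $\alpha=\alpha(\phi)$ every boundary circle of $\Gamma_{\alpha}$ is $\widetilde c$-monochromatic, so $\widetilde c$ descends to a $3$-face coloring of $\Gamma_{\alpha(\phi)}$ — proper because the two sides of each edge carry the two colors $\neq\phi(e)$. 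The two constructions are mutually inverse. Composed with Step 1 this yields the stated count, and the composite $\kappa\mapsto\partial\kappa\mapsto\bigl(\alpha(\partial\kappa),\,\text{induced }c\bigr)$ realizes the claimed $4$-to-$1$ correspondence with $3$-face colorings of state graphs.

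\textbf{The main obstacle.} Step 1 is the classical dictionary between $4$-face colorings and Tait colorings and is routine. The substance is Step 2: making the assertion ``a half-twist on $e$ swaps exactly the two corners at one end of $e$, and nothing else'' precise and correct, and then checking that the consistent twist $\alpha_{e}$ is forced at each edge independently of the others. This is exactly the ribbon-graph bookkeeping that the filtered $n$-color homology of \cite{BMcC} packages, and one could instead deduce the identity from Theorem D of \cite{BMcC} together with the defining structure of that homology; the direct route above has the advantage of being self-contained.
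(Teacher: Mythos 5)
Your argument is correct and reaches the stated count by a genuinely different, more elementary route than the paper. The paper's proof leans on the machinery of filtered $n$-color homology from \cite{BMcC}: Theorem~F.3 and Theorem~8.9 identify the sum $\sum_i \dim \widehat{CH}^i_3(\Gamma)$ with the number of $3$-edge colorings via the total face color polynomial and its agreement with the Penrose polynomial (Theorem~6.11), Theorems~D.1 and D.2 translate those dimensions into counts of $3$-face colorings of the state graphs, and the ``reverse direction'' (no overcounting across the $2^{|E|}$ states) is delegated to the Poincar\'e Lemma (Prop.~6.8 of \cite{BMcC}); Tait's Klein-four argument supplies the factor of $4$ at the end, as in your Step~1. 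Your Step~2 instead builds the bijection directly: the observation that a Tait label $\phi(e)$ is exactly the color carried by the corner opposite $e$ at both endpoints, together with the fact that each corner's color is thereby fixed \emph{before} any twist is chosen, makes the choice of $\alpha_e$ a purely local matter at each edge, and monochromaticity of every boundary circle follows automatically since each boundary arc of an edge joins same-colored corners. This is essentially the content of the paper's Figure~\ref{fig:From-3edge-to-3-face} plus its inverse, assembled without invoking the harmonic-coloring/homology apparatus, and your propriety check (the two faces flanking $e$ get the two colors $\ne\phi(e)$, hence are distinct and differently colored) closes the loop. What the homological route buys is generality --- it works uniformly for all $n$, for nonplanar ribbon graphs, and it produces the spectral-sequence structure the rest of the paper's program depends on --- whereas your combinatorial bijection is tailored to the trivalent, $n=3$ case; within that case, however, your account is complete in all essentials once one accepts the standard ribbon-graph fact (really the definition of a half-twist) that inserting a half-twist in the band of $e$ swaps the two boundary arcs at one end of $e$ and changes nothing else. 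I would only add that your worry in the ``main obstacle'' paragraph is overcautious: since $\widetilde c$ is defined on corners independently of $\alpha$, the twists are forced edge-by-edge with no global consistency condition to verify, which is precisely why the argument is so much shorter than the paper's appeal to Prop.~6.8.
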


\begin{proof}
By Theorem~F.3 in \cite{BMcC}, the sum of the dimensions of the filtered $3$-color homology is equal to the number of $3$-edge colorings of $G$, i.e.,
\begin{equation}\label{eq:3-edge-color-equals-sum}
\# \left\{\mbox{$3$-edge colorings of $G$}\right\} = \sum_{i=0}^{|E|} \dim \widehat{CH}_3^i(\Gamma;\BC).
\end{equation}
See Theorem~8.9 in Section 8.3 of \cite{BMcC} for the full proof.  \Cref{eq:3-edge-color-equals-sum} follows from the fact that the total face color polynomial, $T(G,n)$, used in the statement of Theorem~F.3, is based upon the Poincar\'{e} polynomial of the filtered $n$-color homology. Therefore, $T(G,3)$ is equal to the righthand side of \Cref{eq:3-edge-color-equals-sum}.  By  Theorem~D.1 of \cite{BMcC},  $\widehat{CH}^i_3(\Gamma;\BC)$ is isomorphic to the direct sum of the spaces of harmonic colorings on the states $\Gamma_\alpha$ where $i=|\alpha|$, and by Theorem~D.2, the dimension of the harmonic colorings for $\Gamma_\alpha$ is equal to the number of $n$-face colorings of $\Gamma_\alpha$. Thus, $T(G,3)$ is equal to the sum of the number of $3$-face colorings on each state graph $\Gamma_\alpha$ of $\Gamma$, summed over all $\alpha\in\{0,1\}^{|E|}$.

Next, we show that there is a one-to-one correspondence between $3$-edge colorings of $\Gamma$ and $3$-face colorings of state graphs of $\Gamma$, and importantly, the set of state graphs is {\em exactly} the collection of surfaces where one should look for this correspondence. 

The straightforward direction of the correspondence is from a $3$-edge coloring on $\Gamma$ to a $3$-face coloring on some state graph. Given a plane diagram $\Gamma$ with a $3$-edge coloring, a calculation shows that there is a unique $3$-edge coloring on the blowup of $\Gamma$ for that coloring (cf. Definition 2.11 of \cite{BMcC}). From the $3$-edge coloring of the blowup, one can assemble a state-graph with a $3$-face coloring using the process shown in \Cref{fig:From-3edge-to-3-face} for each edge.

\begin{figure}[H]
\includegraphics[scale=0.35]{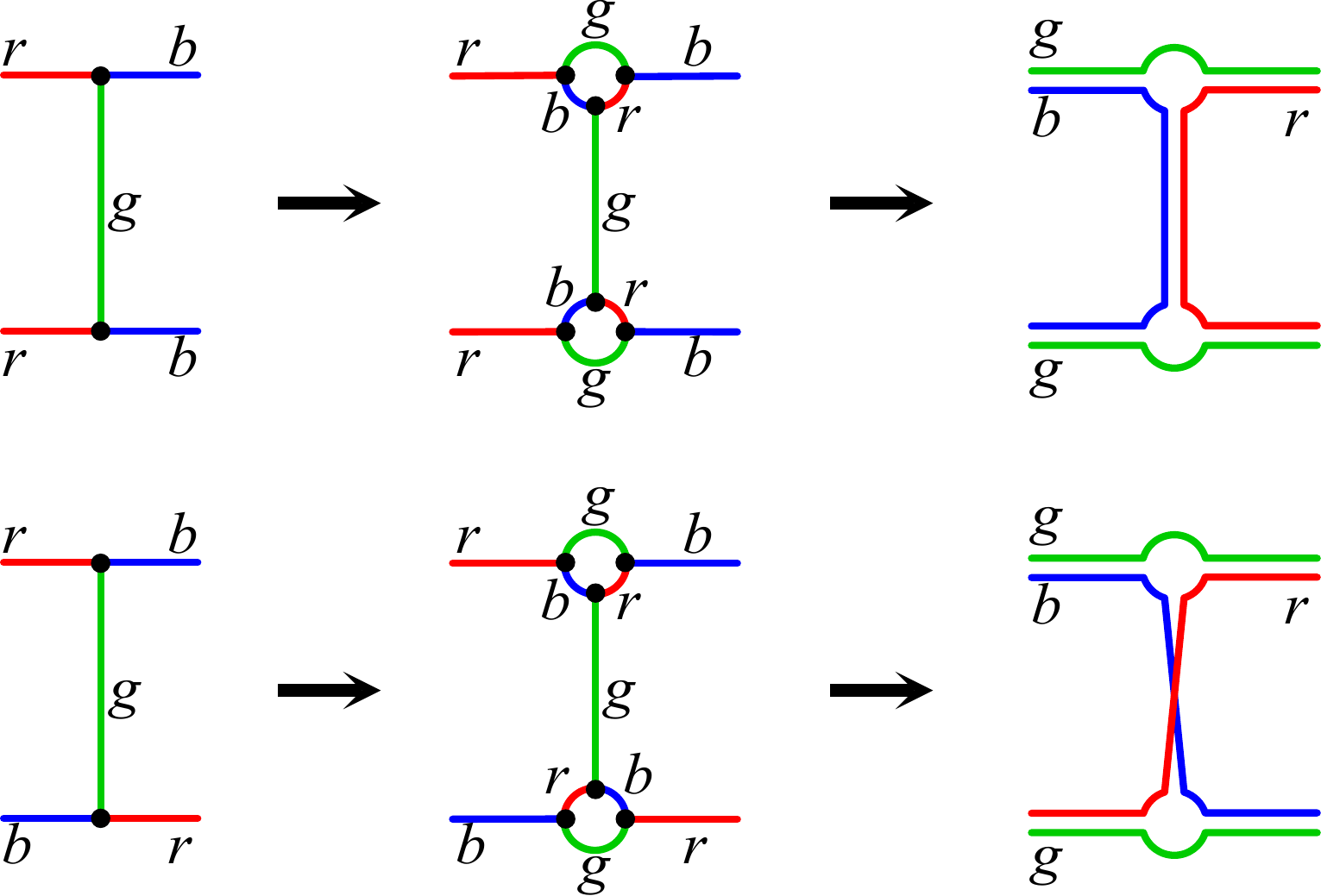}
\caption{From a $3$-edge coloring of $\Gamma$ at an edge to a $3$-face coloring on a state graph of $\Gamma$.}
\label{fig:From-3edge-to-3-face}
\end{figure}

The reverse direction of the correspondence is much harder and uses the ``Poincar\'{e} Lemma'' of Proposition~6.8 of \cite{BMcC}.  This is the main theorem needed to prove Theorem~D.1; in fact, it is one of the most important and fundamental ideas of that paper. This theorem ensures that we are not ``over counting'' $3$-edge colorings in hypercube of states, that is, this correspondence is one-to-one and onto the $3$-face colorings in the state system. 

Using the entire set of state graphs is important because the state system is used in the calculation of the  Penrose polynomial at $n=3$. This evaluation is the Euler characteristic of the filtered $3$-color homology. Penrose showed in 1971 in \cite{Penrose} that the evaluation of the Penrose polynomial at $n=3$ of a plane trivalent graph was equal to the number of $3$-edge colorings of it.  We have been using the total face color polynomial of Theorem~F.3 in this paper, but for planar graphs, the Penrose polynomial and total face color polynomial are equal by Theorem~6.11 of \cite{BMcC}.    Thus, this proves that the righthand side of \Cref{eq:3-edge-color-equals-sum} is equal to the lefthand side.

The final step in the proof of \Cref{theorem:4-to-3-face-coloring} is to note that Tait used the Klein four-group to prove that the number of $4$-face colorings of $\Gamma$ is equal to four times the number of $3$-edge colorings of $\Gamma$ when $\Gamma$ is trivalent and planar \cite{Tait}.  This idea has been discussed in many papers, including, for instance, Example~3.7 of \cite{BMcC}. 

Since every step discussed above was done at the level of a correspondence (not just a count), the theorem is proved, including the top arrow in \Cref{fig:TheoremAProofPlan}.
\end{proof}

\begin{example}
\Cref{fig:ThetaCube} is an instructive example of \Cref{theorem:4-to-3-face-coloring}: The number of $3$-face colorings of the state graphs of the theta graph is six, which are all found on the original graph in Column 0, i.e., $\widehat{CH}^0_3(\theta) \cong \BC^6$. This is also equal to the number of $3$-edge colorings of the theta graph. Note that it requires all eight state graphs to compute the Penrose polynomial, $P(\theta,n) = n^3 - n^2 - n^2- n^2 +n+n+n-n$, even though seven of them do not contribute actual colorings in the homology.
\end{example}

In all pictures that follow, red, blue, and green will be used for $3$-face colorings, and red, blue, green, and yellow will be used for $4$-face colorings.

\begin{remark} The fact that the total face color polynomial evaluated at $n=3$ counts the number of $3$-edge colorings for non-planar graphs indicates  that many of the ideas in this paper can be extended to the non-planar setting (cf. \cite{BKM}). 
\end{remark}

The bottom arrow of \Cref{fig:TheoremAProofPlan} also follows directly from the correspondences between $n$-face colorings and generators of filtered $n$-color homology in \cite{BMcC}.  

\begin{theorem}\label{theorem:4-face-to-4-face}
Let $\Gamma$ be a plane graph of a connected trivalent planar graph $G(V,E)$.  If a state graph $\Gamma_\alpha$ of $\Gamma$ is $4$-face colorable, then $\Gamma$ is $4$-face colorable.
\end{theorem}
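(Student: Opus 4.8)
The plan is to route everything through $3$-edge colorings of the underlying graph $G$ and then invoke the classical planar Tait correspondence, which is already embedded in the proof of \Cref{theorem:4-to-3-face-coloring}. Concretely: from a $4$-face coloring of the state graph $\Gamma_\alpha$ I will produce a proper $3$-edge coloring of $G$, and then read off from \Cref{theorem:4-to-3-face-coloring} that $\#\{4\text{-face colorings of }\Gamma\}=4\cdot\#\{3\text{-edge colorings of }G\}>0$, so $\Gamma$ is $4$-face colorable — which is exactly the bottom arrow of \Cref{fig:TheoremAProofPlan}.

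For the first step I would take the four face-colors to be the elements of the Klein four-group $V=(\ZZ/2)^2$ and color each edge $e$ of $G$ by the sum $x+y\in V$ of the colors $x,y$ of the two faces of $\Gamma_\alpha$ incident to $e$. This sum is nonzero because a proper coloring forbids equal colors across an edge, and the local check at a trivalent vertex is immediate: the three faces meeting the vertex are pairwise adjacent along its three edges, hence carry pairwise-distinct colors $a,b,c\in V$, and the induced edge colors $a+b,\,b+c,\,a+c$ are precisely the three nonzero elements of $V$. This is exactly the Klein four-group argument of Tait \cite{Tait} used inside the proof of \Cref{theorem:4-to-3-face-coloring} and in \cite{BMcC}, now carried out on the surface $\Gamma_\alpha$ instead of on $S^2$. (This also explains the remark preceding the theorem that it ``follows directly from the correspondences in \cite{BMcC}'': $\Gamma$ and $\Gamma_\alpha$ share the same hypercube of state graphs up to a regrading, so a $4$-face coloring of $\Gamma_\alpha$ is a nonzero class in the same filtered $4$-color homology that controls $\Gamma$.) The second step is then a one-line appeal to \Cref{theorem:4-to-3-face-coloring}, whose proof records the planar identity $\#\{4\text{-face colorings of }\Gamma\}=4\cdot\#\{3\text{-edge colorings of }G\}$: the $3$-edge coloring just constructed forces a $4$-face coloring of $\Gamma$.

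The only real subtlety — hence the step I would be most careful about — is that the $3$-edge-coloring $\leftrightarrow$ $4$-face-coloring dictionary is a genuine bijection only on the sphere: on a higher-genus or non-orientable state graph a $3$-edge coloring may fail to integrate to a $4$-face coloring because of monodromy around a nontrivial cycle, so one only has the inequality $\#\{4\text{-face colorings of }\Gamma_\alpha\}\le 4\cdot\#\{3\text{-edge colorings of }G\}$. What saves the argument is that on the $\Gamma_\alpha$ side we need only the \emph{easy} direction (a $4$-face coloring always yields a $3$-edge coloring, on any surface), and we land the conclusion on the \emph{plane} graph $\Gamma$, where the monodromy obstruction is absent and the count is exactly $4\cdot\#\{3\text{-edge colorings of }G\}$; this is precisely why the theorem puts the planar graph on the receiving end. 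A minor point still worth a sentence in the write-up is the degenerate case of an edge of $\Gamma_\alpha$ having a single incident face, which cannot break properness but must be ruled out (or accommodated) before the edge-sum rule is applied.
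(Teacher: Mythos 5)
Your proposal is correct, and it gives a self-contained combinatorial argument where the paper simply cites machinery from \cite{BMcC}. The paper's proof invokes Theorem~F.2, D.1, D.2 and Theorem~8.5 of \cite{BMcC}, describing the underlying idea as ``a generalization of Tait's original argument using filtered $4$-color homology.'' You instead carry out Tait's Klein four-group argument directly on the surface $\Gamma_\alpha$: from a $4$-face coloring of $\Gamma_\alpha$ you read off a $3$-edge coloring of the abstract graph $G$ by summing face colors across each edge in $(\ZZ/2)^2$, noting correctly that the local trivalent-vertex check is surface-independent, and then you close the loop by quoting the planar Tait identity $\#\{4\text{-face colorings of }\Gamma\}=4\cdot\#\{3\text{-edge colorings of }G\}$, which the paper records inside the proof of \Cref{theorem:4-to-3-face-coloring}. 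This is genuinely more elementary than the paper's citation-route: it never mentions filtered $4$-color homology or harmonic colorings, and it isolates exactly which direction of the Tait dictionary is needed on each side (the easy ``differentiate'' direction on the possibly nonplanar $\Gamma_\alpha$, the full bijection only on the plane graph $\Gamma$). Your monodromy remark correctly explains why the asymmetry in the statement is essential. One small clean-up: your closing caveat about an edge of $\Gamma_\alpha$ with only one incident face is worth keeping but should be stated as automatic rather than needing to be ``ruled out'' --- such an edge (a topological bridge in the paper's terminology) forces the single incident face to differ in color from itself, so $\Gamma_\alpha$ could not be properly $4$-face colored, contradicting the hypothesis; thus the edge-sum is always a sum of two distinct face colors and hence nonzero.
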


\begin{proof} In terms of filtered $n$-color homology, the theorem follows immediately from Theorem~F.2, Theorem~D.1, and Theorem~D.2 of \cite{BMcC} for $n=4$. Ultimately, the idea behind the proof is a generalization of Tait's original argument using filtered $4$-color homology -- see Theorem~8.5 of \cite{BMcC} for details.
\end{proof}

\section{Combinatorial gluing theorems}\label{section:combinatorial-gluing-theorems}

In this section, we briefly describe the process of finding a state-extension using an example. We then write out what this looks like using a relative version of filtered $n$-color homology and write down a gluing theorem that will be explored in future research.

Suppose we have the graph shown in \Cref{fig:BirkhoffExampleColor0} with a Birkhoff diamond and we have replaced it with its reducer from \Cref{Theorem:Main}.

\begin{figure}[H]
\includegraphics[scale=0.40]{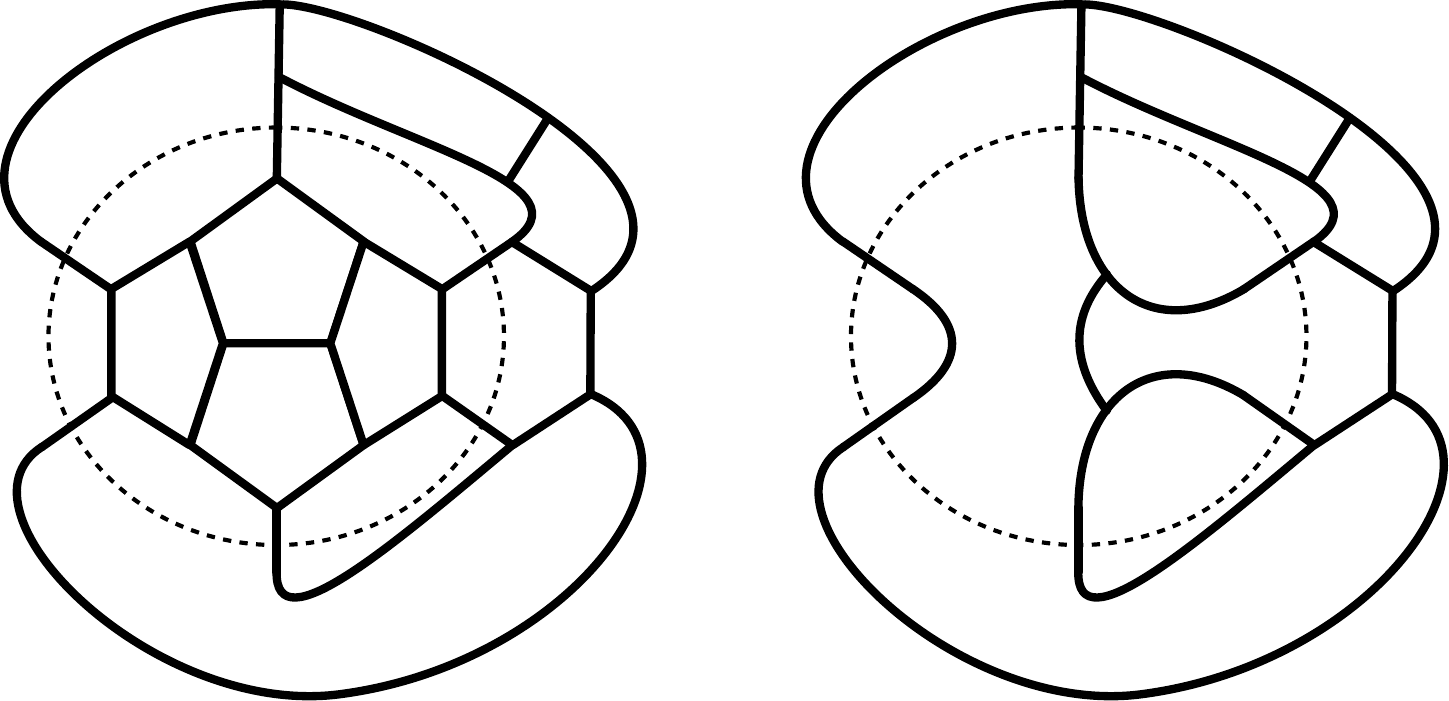}
\caption{A graph with a Birkhoff diamond and its reducer.}
\label{fig:BirkhoffExampleColor0}
\end{figure}

Next, suppose there is a $3$-face coloring of some state graph of the reducer. \Cref{fig:BirkhoffExampleColor1} shows an example of such a coloring on the left.  Two circles that are adjacent at an edge are called {\em interacting}. A proper face coloring requires interacting circles to be different colors. Note that the state graph is given by the immersed circles as was described for the surfaces in \Cref{fig:ThetaCube}.  In this case, the $3$-face coloring  on the left of  \Cref{fig:BirkhoffExampleColor1} is a torus with $1$-skeleton the reducer.

Transfer this coloring to the original graph with the Birkhoff diamond except for the parts of the faces that are part of the reducer. This will color all the circles (faces) away from the configuration in the original graph and the six arcs (partial faces) that enter the Birkhoff diamond, as shown on the right in \Cref{fig:BirkhoffExampleColor1}. 

\begin{figure}[H]
\includegraphics[scale=0.40]{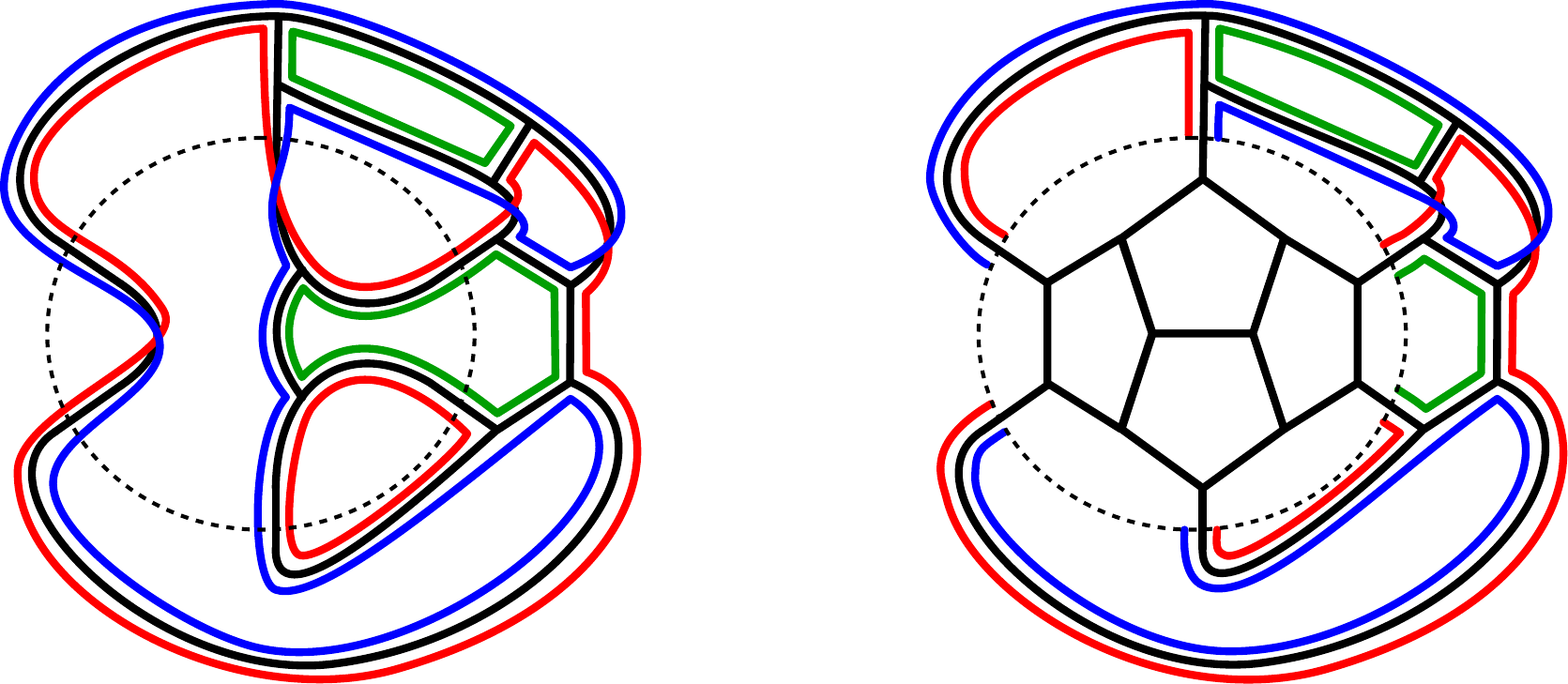}
\caption{A $3$-face coloring of the reducer and the start of a $3$-face coloring of the original graph using that coloring.}
\label{fig:BirkhoffExampleColor1}
\end{figure}

The goal is to extend the colorings on the arcs into the Birkhoff diamond. Ignoring the colors for a moment, there are $2^{21}$ ways the arcs can be extended into the Birkhoff diamond.  The left picture in \Cref{fig:BirkhoffExampleColor2} is one such extension.  Initially, the $3$-face coloring extension may not be a proper coloring -- it may have two  adjacent  faces with the same color, or it may connect two differently colored arcs together to form a circle (see \Cref{fig:BirkhoffExampleColor2} again with circle formed from a red and blue arc). If either of these types of colorings show up in the $2^{21}$ ways, a proper $4$-face coloring can often be obtained from it by painting one (or more) of the circles by a fourth color, as shown with the yellow circle in the right picture of \Cref{fig:BirkhoffExampleColor2}.

\begin{figure}[H]
\includegraphics[scale=0.40]{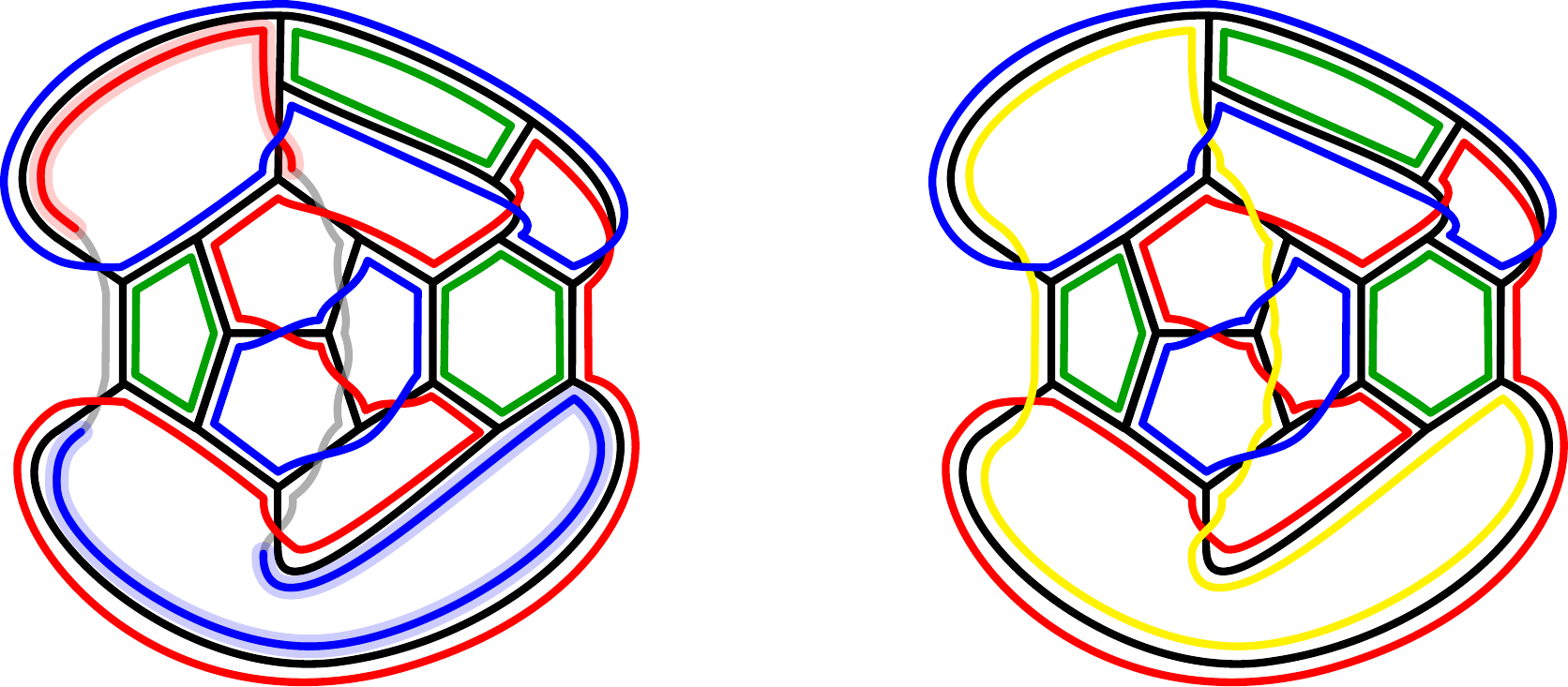}
\caption{The left picture is one possible state with (almost) a $3$-face coloring of the Birkhoff diamond from the $3$-face coloring of the reducer in \Cref{fig:BirkhoffExampleColor1}. Several of the circles in the Birkhoff configuration can be given appropriate colorings, but the highlighted red-blue arcs form a circle that cannot be properly colored red or blue (or green) to form a $3$-face coloring.  However, the  picture on the right is a proper $4$-face coloring of the original graph found by painting the red-blue circle by the fourth color, yellow.}
\label{fig:BirkhoffExampleColor2}
\end{figure}

If the coloring can be extended into the Birkhoff diamond and turned into a $4$-face coloring by coloring one or two circles with the fourth color, then by \Cref{theorem:4-face-to-4-face}, the $3$-face coloring of the reducer induces a $4$-face coloring on the original plane graph.  The proof of \Cref{Theorem:Main}  shows that this coloring process can always be done for every potential $3$-face coloring of the reducer. To capture every potential $3$-face coloring, we need to generate a complete list of all possible ways the six arcs can interact with each other in a graph minus the Birkhoff diamond region (cf. the arcs in the right picture of \Cref{fig:BirkhoffExampleColor1}).  Describing these ``caps'' and colorings on them is the content of the next section.

\begin{remark} \label{remark:face-vs-vertex-colorings} The reader can now see why we work with  face colorings instead of vertex colorings on the dual graph as is usually done in graph theory: The coloring process described above does not nicely translate into vertex colorings on the dual ribbon graphs of state graphs where the circles are replaced by vertices and interactions by edges. The $2^{21}$ ways of extending could, in principle, be converted into vertex colorings, but then some $4$-face colorings we construct later would be almost impossible to detect if dual graphs and vertex colorings were used. However, $4$-face colorings are  detectable using the state graphs. This is because it is possible, as in the example in  \Cref{fig:BirkhoffExampleColor2}, to see how to combine a red arc with a blue arc to get a circle that runs through the Birkhoff diamond, which is then painted the fourth color (yellow).
\end{remark}

We summarize the theory behind this section with a statement of a theorem that will be proven in later work in greater generality (cf. the statement after \Cref{Conjecture:Kempe-state-reducible}). For now, suppose that $\Gamma\setminus C$ is a ``ribbon graph with boundary'' found by removing a configuration $C$ from it.  Furthermore, assume that we have defined a ``relative filtered $n$-color homology,'' $\widehat{CH}_n^*(\Gamma\setminus C,\del (\Gamma\setminus C))$, that captures $n$-face colorings up to this boundary.  See the righthand picture of \Cref{fig:BirkhoffExampleColor1} for an example of a homology class in this homology. Of course, the configuration $C$ can also be thought of as  a ``ribbon graph with boundary.'' 

\begin{theorem} There exists a gluing map,
$$S_C:\widehat{CH}_n^i(\Gamma\setminus C,\del (\Gamma\setminus C)) \ot \widehat{CH}_n^j(C,\del C) \ra \widehat{CH}_n^{i+j}(\Gamma),$$
which describes when relative harmonic solutions of the discrete color Laplacian can be glued together to get a solution for the entire graph.
\label{theorem:gluing-map}
\end{theorem}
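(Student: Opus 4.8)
The plan is to construct $S_C$ first at the level of enhanced states and colorings, verify that it intertwines the relevant differentials, and then read off the homological statement through the Hodge-theoretic dictionary of \cite{BMcC}. For the applications envisioned after \Cref{Conjecture:Kempe-state-reducible} one only needs this map at the chain level, so the real content is in setting up the relative theory cleanly.

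First I would make the relative homology precise. Deleting the interior of a configuration $C$ leaves a ribbon graph with boundary $\Gamma\setminus C$ whose interior edges still each carry a twist parameter in $\{0,1\}$, so there is a hypercube of ``relative state graphs'' $(\Gamma\setminus C)_\alpha$, all sharing the same boundary $\del(\Gamma\setminus C)$. Restricting the color Dirac operator of \cite{BMcC} to this hypercube and running the computation that proves Theorem~D.2 there, the harmonic elements are exactly the $n$-face colorings of $(\Gamma\setminus C)_\alpha$ that are proper in the interior and are recorded along the boundary only through the induced colors of the boundary arcs (compare the right-hand picture of \Cref{fig:BirkhoffExampleColor1}). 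Define $\widehat{CH}_n^i(\Gamma\setminus C,\del(\Gamma\setminus C))$ to be the cohomology of this relative complex in twist-degree $i$, and likewise $\widehat{CH}_n^j(C,\del C)$, using the canonical identification $\del(\Gamma\setminus C)=\del C$.

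Next I would define $S_C$ on basis elements. Given enhanced states $\big((\Gamma\setminus C)_\alpha,c_1\big)$ and $\big(C_{\alpha'},c_2\big)$, glue the two ribbon graphs with boundary along their common boundary; since the twistable edges of $\Gamma\setminus C$ and of $C$ together account for all the twistable edges of $\Gamma$, the result is a closed state graph $\Gamma_\beta$ of $\Gamma$ with $|\beta|=|\alpha|+|\alpha'|=i+j$. If $c_1$ and $c_2$ induce the same coloring on the common boundary, then $c=c_1\cup c_2$ is a genuine $n$-face coloring of $\Gamma_\beta$ --- properness across the seam is precisely the matching condition --- and we set $S_C\big(((\Gamma\setminus C)_\alpha,c_1)\ot(C_{\alpha'},c_2)\big)=(\Gamma_\beta,c)$; if the boundary colorings disagree, we send the pair to $0$. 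Extending $\BC$-bilinearly gives $S_C$ on chains. Because the color Dirac operator is assembled from local, per-edge terms, on the hypercube of $\Gamma$ the differential decomposes (up to the usual Koszul sign) as the sum of the relative differential on $\Gamma\setminus C$ and the one on $C$; hence $S_C$ is a chain map and descends to the asserted map on $\widehat{CH}_n^{\bullet}$. Transporting through the isomorphisms with harmonic colorings (Theorems~D.1 and~D.2 of \cite{BMcC}) then shows that $S_C$ sends a pair of relative harmonic colorings with matching boundary data to the harmonic coloring of $\Gamma$ they determine, which together with \Cref{theorem:4-to-3-face-coloring} and \Cref{theorem:4-face-to-4-face} is exactly the gluing statement and recovers the coloring-by-coloring extension of \Cref{fig:BirkhoffExampleColor1}--\Cref{fig:BirkhoffExampleColor2}.

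The hard part will not be the construction but the relative Hodge theory behind it. One must prove a relative analogue of the ``Poincar\'{e} Lemma'' of Proposition~6.8 of \cite{BMcC}, so that the relative hypercube does not overcount colorings, and one must check that a glued coloring is harmonic --- not merely a cocycle --- for the discrete color Laplacian on $\Gamma$; equivalently, that harmonicity is a purely local-plus-boundary-matching condition, so that $S_C$ restricted to harmonic representatives lands in harmonic representatives. Making the boundary bookkeeping precise (the pairing of half-edges, the effect of twists carried along boundary arcs, and the exact sense in which the plain $\ot$ over $\BC$ collapses onto matching boundary colorings, perhaps more naturally described as a tensor product over a boundary coloring algebra) is where the genuine difficulty lies, and it is what we defer to the forthcoming paper.
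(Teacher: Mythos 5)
The paper does not actually prove this theorem: it is stated as a summary of the framework, with the remark immediately before it that it ``will be proven in later work in greater generality,'' and the remark just after \Cref{proposition:gluing-extension} that ``we do not need this theorem for this paper and will work with enhanced states $(\Gamma_\alpha, c)$ instead.'' So there is no in-paper proof to compare against; the authors explicitly defer it.

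That said, your sketch is a sensible reconstruction of what such a proof would likely contain, and it is well calibrated. Defining $S_C$ on enhanced states by gluing along a common boundary when the boundary colorings agree and sending mismatches to zero is exactly the combinatorial shadow of the gluing the paper uses informally in \Cref{fig:BirkhoffExampleColor1}--\Cref{fig:BirkhoffExampleColor2}, and it is consistent with the refined statement in terms of $S_{\alpha,\beta}$ on the harmonic spaces $\widehat{\mathcal{CH}}_n(\Gamma_\alpha,\partial\Gamma_\alpha)$. The observation that the twist-hypercube of $\Gamma$ splits as the product of the two relative hypercubes, so that $|\beta|=|\alpha|+|\alpha'|$ and the degree $i+j$ is automatic, is the right structural point. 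And you correctly identify the two places where the real work lies: (i) establishing a \emph{relative} analogue of the Poincar\'e Lemma (Proposition~6.8 of \cite{BMcC}) so the relative complexes compute what they should without overcounting, and (ii) proving that harmonicity for the discrete color Laplacian is preserved under gluing, i.e.\ that it is a local-plus-boundary-matching condition. Since the paper itself declines to carry out exactly these steps, you have not filled the gap, but you have located it precisely, which is the most one can fairly ask here. One small caution: the map in the statement is a plain $\BC$-bilinear tensor product, but as you note the natural structure is a tensor over a boundary-coloring algebra; if you carry the plain $\ot$ through, you must be careful that ``send non-matching pairs to $0$'' is compatible with the differential before claiming $S_C$ is a chain map, rather than assuming it.
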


\begin{remark}The discrete color Laplacian should not be confused with the discrete Laplacian in graph theory. See Section 6.1 of \cite{BMcC} for its definition.
\end{remark} 

This theorem also holds for the space of harmonic colorings, $\widehat{\mathcal{CH}}_n(\Gamma_\alpha)$, for individual states $\Gamma_\alpha$ (cf. Section 6.4 of \cite{BMcC}). If $\Gamma_\alpha$ is a relative state graph of $\Gamma\setminus C$ and $C_\beta$ is a relative state graph of $C$, then let $\Gamma_{\alpha \# \beta}$ be the ``glued together'' state graph of $\Gamma$. Then the map becomes:
$$S_{{\alpha,\beta}}:\widehat{\mathcal{CH}}_n(\Gamma_\alpha, \partial \Gamma_\alpha ) \ot \widehat{\mathcal{CH}}_n(C_\beta, \partial C_\beta) \ra \widehat{\mathcal{CH}}_n(\Gamma_{\alpha \# \beta}).$$

\Cref{Theorem:Main} can be restated using this idea. It follows from:

\begin{proposition}
Let $BD$ be the Birkhoff diamond configuration and $R$ its reducer as in \Cref{fig:BirkhoffExampleColor0}.  For every relative state graph $\Gamma_\alpha$  of $\Gamma\setminus R$ and  $[\gamma]\in \widehat{\mathcal{CH}}_3(\Gamma_\alpha, \partial \Gamma_\alpha ) $ where  there exists a  relative state graph of the reducer, $R_\beta$, such that 
$$S_{R_\beta}([\gamma]): \widehat{\mathcal{CH}}_3(R_\beta, \partial R_\beta)  \ra  \widehat{\mathcal{CH}}_3(\Gamma_{\alpha \# \beta})$$
is a nonzero map, then there exists a relative state graph of the Birkhoff diamond, $BD_{\beta'}$, and a  $[\gamma'] \in\widehat{\mathcal{CH}}_4(\Gamma_\alpha, \partial \Gamma_\alpha)$ such that
$$S_{BD_{\beta'}}([\gamma']): \widehat{\mathcal{CH}}_4(BD_{\beta'}, \partial BD_{\beta'})  \ra  \widehat{\mathcal{CH}}_4(\Gamma_{\alpha \# \beta'})$$
is also a nonzero map.
\label{proposition:gluing-extension}
\end{proposition}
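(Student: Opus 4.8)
The plan is to convert the statement about gluing maps into a finite, machine-checkable statement about ordinary face colorings, using the dictionary between harmonic colorings and face colorings from \cite{BMcC}. First I would unwind the hypothesis. Since $\Gamma\setminus R$ and $\Gamma\setminus BD$ are the \emph{same} ribbon graph with boundary --- removing the reducer or removing the Birkhoff diamond leaves the same ``cap'' $\Gamma\setminus C$ with six boundary arcs --- the relative state graph $\Gamma_\alpha$ is common to both settings, and no twisting of cap edges is ever needed. By Theorem~D.2 of \cite{BMcC}, a class $[\gamma]\in\widehat{\mathcal{CH}}_3(\Gamma_\alpha,\partial\Gamma_\alpha)$ is represented by an honest proper relative $3$-face coloring $\gamma$ of the cap, and the condition that $S_{R_\beta}([\gamma])$ be nonzero says exactly that $\gamma$ extends, along the reducer state graph $R_\beta$, to a proper $3$-face coloring of the closed state graph $\Gamma_{\alpha\#\beta}$. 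Dually, the conclusion we must produce amounts to a state graph $BD_{\beta'}$ of the Birkhoff diamond together with a proper $4$-face coloring of $\Gamma_{\alpha\#\beta'}$ that restricts on the cap to a proper $4$-face coloring representing some $[\gamma']\in\widehat{\mathcal{CH}}_4(\Gamma_\alpha,\partial\Gamma_\alpha)$. So the proposition is equivalent to the combinatorial assertion that every relative $3$-coloring of the cap which survives gluing across the reducer also survives gluing across the Birkhoff diamond, after being promoted to a $4$-coloring.

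Second I would reduce to finitely many cases. The only feature of the cap that the gluing can detect is its \emph{boundary datum}: the colors assigned to the six arcs entering the configuration, together with the partition recording which of those arcs lie on a common face of the cap. This is bounded combinatorial data, so only finitely many boundary data can occur, and finitely many of those actually arise from a relative $3$-coloring that extends across the reducer. Enumerating them is exactly the ``caps and colorings on them'' promised in \Cref{section:combinatorial-gluing-theorems}, and it produces a short, explicit list (morally Birkhoff's list of ring colorings of the six-face ring). The point is that it then suffices to verify the conclusion for each entry of this finite list, independently of the unbounded interior of the ambient graph.

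Third I would run, for each boundary datum on the list, the extension search described in \Cref{section:combinatorial-gluing-theorems}. There are $2^{21}$ state graphs $BD_{\beta'}$, one per subset of the $21$ internal edges of the Birkhoff diamond carrying a half-twist; for each one the six colored arcs extend in a determined way into the diamond, joining cap faces and diamond faces into circles. For each extension one tests whether the result is already a proper $3$-face coloring of the closed surface $\Gamma_{\alpha\#\beta'}$, or becomes a proper $4$-face coloring after repainting one or two of those circles with the fourth color yellow as in \Cref{fig:BirkhoffExampleColor2} --- the repainted faces being required to form an independent set in the adjacency graph of the coloring, which the search checks. The assertion, verified by the supercomputer computation behind \Cref{Theorem:Main} and recorded in \Cref{proposition:3-face-to-4-face-Birkhoff}, is that for \emph{every} boundary datum on the finite list at least one pair $(\beta',\text{repainting})$ succeeds. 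Reading a successful output back through Theorem~D.2 of \cite{BMcC} yields the required $BD_{\beta'}$, the class $[\gamma']\in\widehat{\mathcal{CH}}_4(\Gamma_\alpha,\partial\Gamma_\alpha)$ represented by the repainted $4$-coloring of the cap, and the nonvanishing of $S_{BD_{\beta'}}([\gamma'])$.

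The main obstacle is step two: justifying that the relative filtered homology of the cap, and hence the gluing behavior, depends only on the bounded boundary datum and not on the interior of $\Gamma\setminus BD$. This is the statement that $\widehat{\mathcal{CH}}_n(\Gamma_\alpha,\partial\Gamma_\alpha)$ is, for gluing purposes, ``supported near $\partial\Gamma_\alpha$,'' which is precisely the role of a properly developed relative filtered $n$-color homology and of the gluing map $S_C$ of \Cref{theorem:gluing-map}. Granting that, steps one and three reduce to bookkeeping plus a finite (if large) machine verification; a further routine point is to check that $\Gamma_{\alpha\#\beta'}$ really is a state graph of the original graph and that the yellow-repainting produces a \emph{harmonic} $4$-coloring of that closed surface, not merely a proper coloring of a fragment of it.
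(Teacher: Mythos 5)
Your overall architecture is the same as the paper's: translate the homological statement into honest face colorings via Theorem~D.1/D.2 of \cite{BMcC}, observe that $\Gamma\setminus R$ and $\Gamma\setminus BD$ are the same ribbon graph with boundary, reduce to a finite list of ``caps,'' and dispose of the finite list by the computer verification recorded in \Cref{proposition:3-face-to-4-face-Birkhoff}. The paper itself does not give a self-contained proof of \Cref{proposition:gluing-extension} in homological language (it defers \Cref{theorem:gluing-map} and the relative theory to later work and works directly with enhanced states); the verified content lives in \Cref{theorem:cap-map} plus \Cref{proposition:3-face-to-4-face-Birkhoff}, which is exactly the path you sketch.

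However, there is a genuine gap in your step two. You declare that the only datum the gluing can detect is the colors of the six arcs together with the partition recording which arcs lie on a common face (i.e.\ the arc pairing of the twelve spoke endpoints). That is the information in a \emph{pre-cap}/basic cap, but it is not the information the paper actually needs. The planar caps $\mathcal{P}_6$ of \Cref{theorem:cap-map} additionally record, via the clasp/buckle diamond symbols $V[a,b]$, which pairs of arcs are \emph{adjacent along an edge of the ribbon graph outside the configuration}. This extra data is indispensable precisely in your step three: when you repaint one or two arcs with the fourth color, properness on the cap requires that the repainted arcs not be adjacent to each other, and that adjacency is carried by the $V[a,b]$ data, not by the arc pairing or by the original three-coloring. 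Your own phrasing betrays the inconsistency --- you ask the search to check that the repainted faces ``form an independent set in the adjacency graph of the coloring,'' but your boundary datum does not determine that adjacency graph. Concretely, two relative state graphs $\Gamma_\alpha$ can present identical colors and arc pairings at the boundary yet differ in whether, say, $arc[2,3]$ and $arc[6,7]$ interact outside the configuration, and this can change which yellow repaintings are legal. This is also why the actual list is not ``morally Birkhoff's list of ring colorings'' but rather $34{,}179$ planar caps and $7{,}210$ colored caps: the interaction data multiplies the cases. To repair the argument, replace your ``boundary datum'' with the full planar-cap datum of \Cref{theorem:cap-map} (arcs plus $V[a,b]$ interactions), note that \Cref{theorem:cap-map} shows every bridgeless state graph of $\Gamma$ determines such a cap, and then your steps one and three go through as stated against \Cref{proposition:3-face-to-4-face-Birkhoff}.
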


We have hidden the $n$-color homology theory behind \Cref{theorem:gluing-map} and \Cref{proposition:gluing-extension} to a point where it is purely a combinatorial process that can be  implemented on a computer (see \Cref{section:main-computation}). Therefore, we do not need this theorem for this paper and will work with enhanced states $(\Gamma_\alpha, c)$ instead. However,  gauge theorists should recognize \Cref{theorem:gluing-map} as a combinatorial version of a ``gluing theorem'' for relative $n$-color homology in the same way as one patches solutions of non-linear differential equations along a neck as in Seiberg-Witten theory  (cf. \cite{MST} and \cite{MMS}). Knot theorists should recognize this as working with  ``tangles'' as in  \cite{BN3}, but our tangles (the relative states $\Gamma_\alpha$ and $C_\beta$) glue together to get multicolored ``links'' $\Gamma_{\alpha\sharp\beta}$; imagine the picture on the left in \Cref{fig:TakingTheCap} with colors, for example.

\section{The theory behind how to prove \Cref{Theorem:Main}}
\label{section:taking-the-cap-of-a-state}

This section addresses the question of why one can restrict to a finite list of interacting arcs (the ``caps'') to prove \Cref{Theorem:Main}. The problem becomes apparent if one looks at the righthand picture of \Cref{fig:BirkhoffExampleColor1}.  In that small example, there are six arcs, four of which are adjacent along  edges with the green circle in the top right corner of the state. In general, however, for a graph with $|E| \gg 0$, one may expect all six arcs could interact with each other numerous times and with any number of circles. (The arcs can  spread out from the configuration ``deep'' into the graph.) Furthermore, the circles that interact with the arcs may interact with each other in a multitude of ways. Thus, there is potentially an unbounded number of states that would need to be  checked to prove \Cref{Theorem:Main}.

The state-extension process described in \Cref{section:combinatorial-gluing-theorems} is what forces this list to be finite and relatively small. First, we always start with a $3$-face coloring of a state graph of the reducer, which mean all circles outside of the reducible configuration are colored by red, blue, or green so that pairwise interacting circles are different colors. In finding a $4$-face coloring of the original graph, only one or two of the non-interacting arcs are potentially painted the fourth color (compare  \Cref{fig:BirkhoffExampleColor1}  with \Cref{fig:BirkhoffExampleColor2}). All circles  outside of the reducible configuration and arcs-not-painted-yellow remain the same  (red, blue, or green). Therefore, the colored circles outside the reducible configuration, like the green circle in \Cref{fig:BirkhoffExampleColor1}, do not play a role in the state-extension process and can be safely ignored (no Kempe switches are needed). 

Removing the circles outside of the configuration reduces the problem to studying only the arcs that exit and then re-enter the configuration region (see \Cref{fig:Basic-cap-and-planar-cap}).  These sets of arcs can be enumerated as follows:
\begin{enumerate}
\item First, there is a finite list of ways the six arcs can exit and re-enter the configuration with a minimum number of interactions between arcs (only what is needed for planarity). Call each such set of arcs and interactions a {\em basic cap}.  
\item Then, for each basic cap, there is a finite list of combinations that show up in planar graphs in which the six arcs in that basic cap can interact with each other in a state graph. Call each such set of arcs and interactions a {\em planar cap}.
\end{enumerate}

\begin{figure}[h]
\includegraphics[scale=0.45]{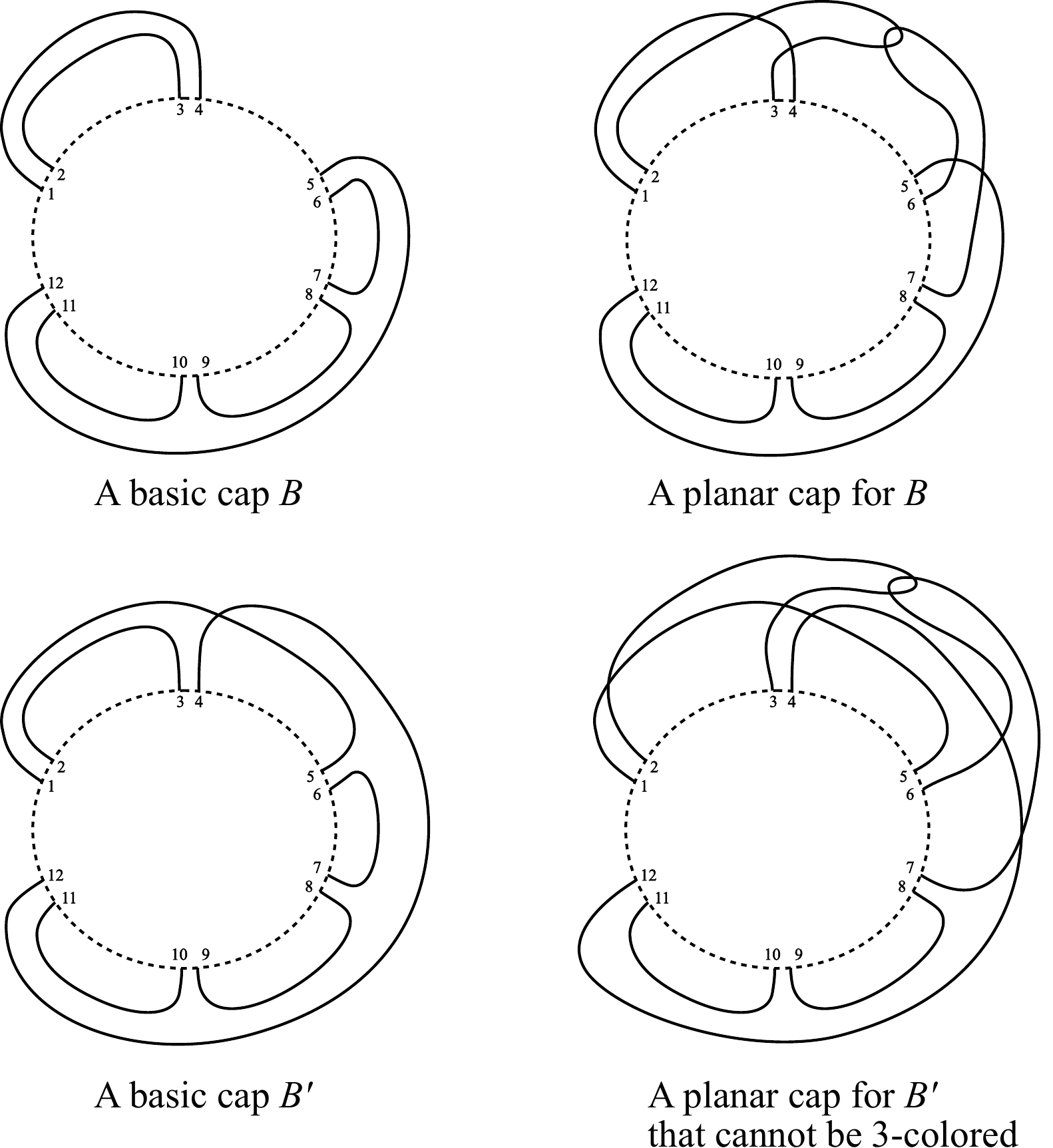}
\caption{Examples of basic caps and  planar caps. The basic cap is minimal in the sense of having only the interactions necessary for planarity: $B$ has no extra interactions between arcs while $B'$ must have $arc[1,5]$ intersect $arc[4,12]$ to be planar. The planar cap of $B$ shows an example of $arc[2,3]$ interacting with $arc[6,7]$. Wherever the arcs intersect or share a spoke, those arcs must be different colors in an $n$-face coloring.}
\label{fig:Basic-cap-and-planar-cap}
\end{figure}

\Cref{fig:Basic-cap-and-planar-cap} shows examples of each. The set of basic caps is clearly finite. For each basic cap there is only a finite number of ways for the six arcs to interact with each other. Therefore, the set of planar caps must also be a finite set.  Denote the entire set of planar caps, which include the basic caps, by $\mathcal{P}_6$. The fact that $\mathcal{P}_6$ is a finite set becomes the basis for proving \Cref{Theorem:Main}.

\bigskip

The arcs in a planar cap may or may not be able to be colored by $n$ colors so that pairs of arcs at a boundary edge of a configuration (called a {\em spoke}) or interacting arcs are painted different colors. The planar cap for $B'$ on the right in \Cref{fig:Basic-cap-and-planar-cap} is an example of a planar cap that cannot be $3$-face colored, but can be $4$-face colored. In this example, if $arc[1,5]$ is red, $arc[2,3]$ must be a different color, say blue, since the two arcs represent different faces adjacent to the edge between $1$ and $2$, called spoke 1, of the configuration (the spoke edge is not shown). In this paper, the number of combinations that need to be checked can always be reduced by a factor of $6$ for $3$-face colorings by fixing the arc exiting at $1$ to be red and the arc exiting at $2$ to be blue.  Since $arc[4,12]$ is next to $arc[2,3]$ at  spoke 2, it cannot be blue. The arc cannot be red because it intersects $arc[1,5]$. Label it green. Note that $arc[6,7]$ is adjacent to $arc[1,5]$ at spoke 3 and intersects $arc[2,3]$ and $arc[4,12]$. Hence it must be colored a fourth color, yellow. The remaining arcs can be given different but valid proper $4$-face colorings. Hence, this example shows that sometimes the caps can be colored by three colors, like basic cap $B$ (try it), and sometimes they cannot. 

Even if a cap can be $3$-colored, inserting a state of a configuration into it may give rise to a set of circles that are not $3$-colorable. For example, \Cref{fig:connect-sum-process} shows how to insert a state $BD_\beta$ of the Birkhoff diamond into the basic cap $B$ from \Cref{fig:Basic-cap-and-planar-cap}. Call the glued together cap and state $B\sharp BD_\beta$ on the right a {\em capped state}.  

\begin{figure}[H]
\includegraphics[scale=0.40]{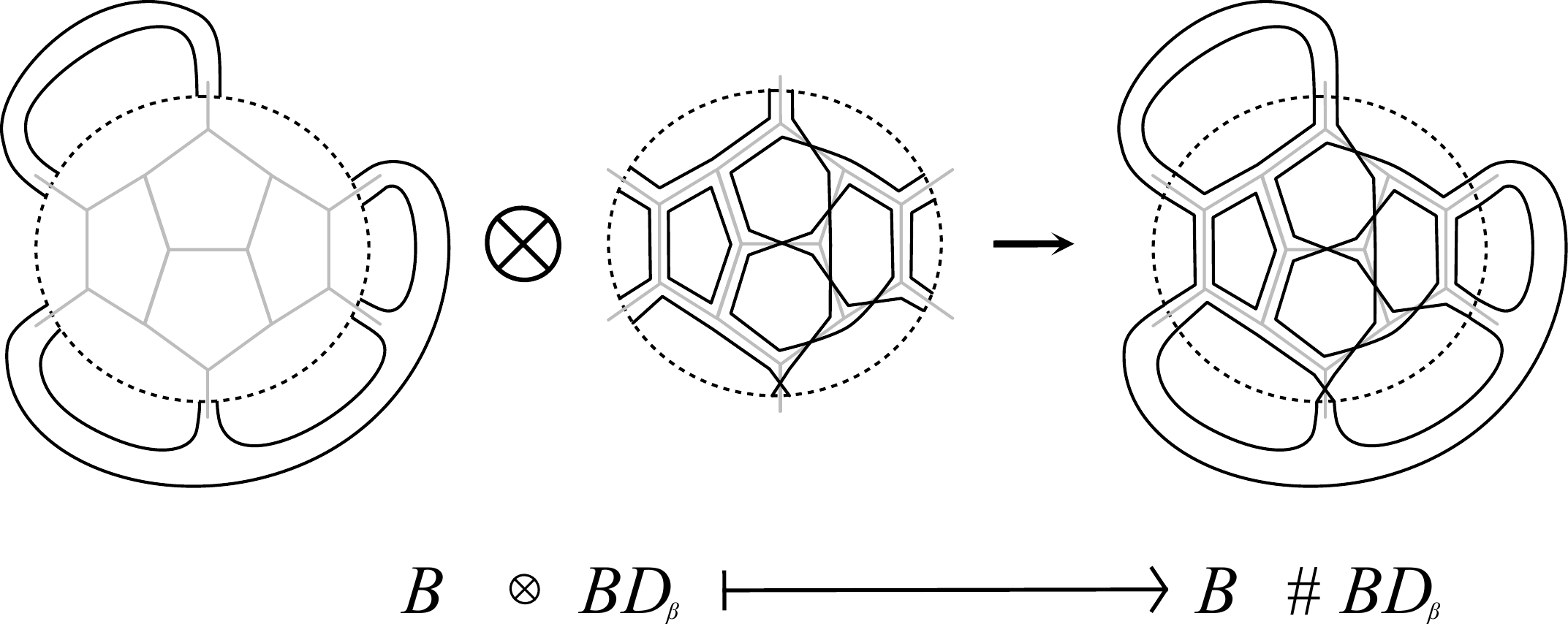}
\caption{The capped state $B\sharp BD_\beta$ is found by inserting the state $BD_\beta$ into the (basic) cap $B$ from \Cref{fig:Basic-cap-and-planar-cap}.}
\label{fig:connect-sum-process}
\end{figure}

In this example, one can check that there is no $3$-face coloring of $B\sharp BD_\beta$, but there is a $4$-face coloring.  Again, a proper coloring here is a choice of colors for each circle of $B\sharp BD_\beta$ such that pairwise circles that interact along an edge in the Birkhoff diamond are different colors (which takes care of the spoke edges as well) or intersecting circles in the cap (which there are none for this example) are different colors. 

Notice that the cap $B$ has a $3$-coloring but the relative state $BD_\beta$ does not. However, there is a capped state $B\sharp R_\alpha$ of a state $R_\alpha$ of the Birkhoff reducer $R$ from \Cref{Theorem:Main} that supports a $3$-face coloring $c'$, and that the $3$-coloring on $B$ can be used to give a $4$-face coloring $c$ to the capped state $B\sharp BD_\beta$ (see \Cref{fig:ThreeColorOnBirkhoffReducer}). This shows that the $3$-face coloring on the capped state $B\sharp R$ is state-extendible to a $4$-face coloring on the capped state $B\sharp BD_\beta$.

\begin{figure}[h]
\includegraphics[scale=0.35]{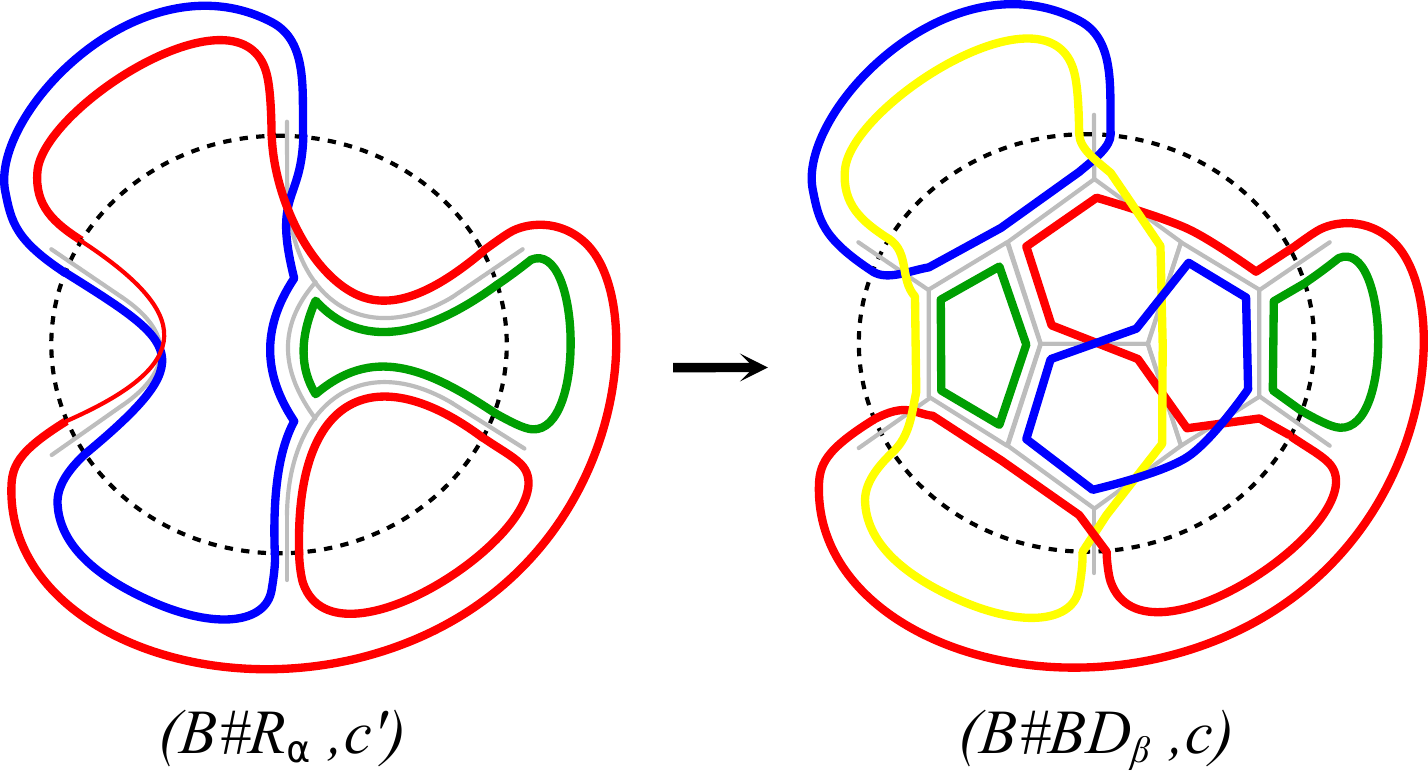}
\caption{A $3$-face coloring on a capped state $B\sharp R_\alpha$ of the reducer that leads to a $4$-face coloring on a capped state $B\sharp BD_\beta$ of the Birkhoff diamond. This shows state-extendibility   on the level of capped states. Compare this picture to \Cref{fig:BirkhoffExampleColor1} and \Cref{fig:BirkhoffExampleColor2}.}
\label{fig:ThreeColorOnBirkhoffReducer}
\end{figure}

Thus, once we prove the next theorem, \Cref{Theorem:Main} reduces to showing that, for all planar caps $P\in \mathcal{P}_6$ and for all relative states $R_\alpha$ of the Birkhoff reducer,  every proper $3$-face coloring on the capped state $P\sharp R_\alpha$ is state-extendible to a $4$-face coloring on some capped state $P\sharp BD_\beta$ of the Birkhoff diamond. Since we are always working with states with colorings, states with a  {\em topological bridge} can be ignored, that is, an edge that shares the same face on both sides of the edge (cf.  columns 1, 2, and 3 of \Cref{fig:ThetaCube}).

\begin{theorem}
Let $\Gamma_\alpha$ be a state graph of a plane graph $\Gamma$ with configuration $C$ with $k$ boundary spokes. Write  $\Gamma_\alpha$ as $D_\beta\sharp C_{\beta'}$ where $D_\beta$ is the relative state outside of the configuration $\Gamma\setminus C$ and $C_{\beta'}$ is the relative state of the configuration.  Then there exists a map $$cap:\{\mbox{state graphs of $\Gamma$ without topological bridges}\} \ra \mathcal{P}_k$$ that takes only the arcs of $D_\beta$ to a unique planar cap $P\in \mathcal{P}_k$.
\label{theorem:cap-map}
\end{theorem}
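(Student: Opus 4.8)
The plan is to make the map $cap$ explicit and then verify it is well-defined and lands in $\mathcal{P}_k$. Start with a state graph $\Gamma_\alpha$ of $\Gamma$ with configuration $C$ having $k$ boundary spokes, and write $\Gamma_\alpha = D_\beta \sharp C_{\beta'}$. The configuration $C$ meets the rest of the graph along $k$ spokes, so in $D_\beta$ exactly $k$ arc-endpoints lie on the spoke-boundary of the removed region; each spoke carries two such endpoints (one for each face adjacent to the spoke edge), so the arcs of $D_\beta$ pair up these $2k$ endpoints into some matching. Since we have thrown away topological bridges, no arc begins and ends on the same spoke without leaving, and — crucially — by the discussion preceding the theorem we may also discard every circle of $D_\beta$ that does not touch the configuration region, since those play no role in state-extension. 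What remains of $D_\beta$ is exactly a system of arcs connecting spoke-endpoints, together with the record of which arcs interact (share a spoke, or cross). This combinatorial datum — the pairing of the $2k$ endpoints plus the interaction data, retained up to planar isotopy in the complement of the configuration disk — is what we \emph{declare} to be $cap(\Gamma_\alpha)$.

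The key steps, in order, are: (1) extract the arc system from $D_\beta$ as above, forgetting the circles disjoint from $C$ and the colors; (2) show this arc system is planar, i.e.\ realizable in an annular/disk region with the spoke endpoints on one boundary circle in the prescribed cyclic order — this is immediate because $\Gamma_\alpha$ minus the configuration disk is itself a planar (ribbon) surface and the arcs are embedded boundary components of it; (3) show that the \emph{finitely many} possible such planar arc systems on $k$ spokes are precisely the planar caps, so that the arc system is literally an element of $\mathcal{P}_k$ — this requires matching our definition of $\mathcal{P}_k$ (basic caps plus the finite list of allowed extra arc-arc interactions) against the structures that can arise, which I would do by the same ``minimum interactions for planarity, then enumerate extra interactions'' decomposition used to define $\mathcal{P}_k$; and (4) show the assignment is well-defined, i.e.\ two relative states $D_\beta$ producing the same arc system give the same element of $\mathcal{P}_k$ — this is essentially by construction once ``same arc system'' is interpreted up to the equivalence defining $\mathcal{P}_k$. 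Uniqueness of the target planar cap then comes from the fact that the arc system, as a planar object with labeled spoke endpoints, determines its class in $\mathcal{P}_k$.

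I expect the main obstacle to be step (3): reconciling the abstractly-extracted arc system with the concrete, hand-built list $\mathcal{P}_6$ (and its $\mathcal{P}_k$ analogue) of basic caps and planar caps from \Cref{section:taking-the-cap-of-a-state}. One must check that throwing away the non-interacting circles never destroys planarity or merges distinct caps, and that the interactions recorded (spoke-sharing versus genuine crossings) are exactly the data $\mathcal{P}_k$ tracks — in particular that no ``hidden'' interaction forced by a circle we deleted can change which planar cap we land in. A secondary subtlety is the role of non-orientable state graphs: a half-twisted ribbon near the configuration could in principle affect how arcs close up, so I would need to observe that any twist occurring strictly inside the configuration is carried by $C_{\beta'}$, not by $D_\beta$, and hence does not affect the arc system of $D_\beta$; twists in $D_\beta$ away from the configuration live on circles or arc-segments that, after deleting disjoint circles, either disappear or can be absorbed without changing the (purely combinatorial) interaction pattern. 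Once these points are nailed down, the map $cap$ and its well-definedness follow, and the finiteness of $\mathcal{P}_k$ — already established — gives the theorem.
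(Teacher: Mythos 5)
Your plan follows the same broad strategy as the paper's proof: delete the circles of $D_\beta$, keep only the arcs and their pairwise interactions, and observe that the result is a planar cap in $\mathcal{P}_k$. The one place you misstep is in step (2), where you assert that ``$\Gamma_\alpha$ minus the configuration disk is itself a planar (ribbon) surface.'' That is false whenever $\alpha$ has half-twists on edges of $D_\beta$: then the state graph is non-orientable or of higher genus, not planar. The correct observation, which the paper makes, is that the plane \emph{diagram} of the ribbon graph is planar; a half-twist on an edge of $D_\beta$ produces a crossing (a ``buckle'') between the two arcs adjacent along that edge, and these crossings, along with clasps, are what must be recorded --- the paper does so with the diamond symbol. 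From there the paper runs a short simplification calculus (forget the underlying graph, remove duplicate diamonds, treat crossings as virtual crossings, discard spoke diamonds and the circles of $D_\beta$) that reduces any bridgeless state to a canonical element of $\mathcal{P}_k$. You flag a worry in step (3) about ``hidden'' interactions forced by a deleted circle, but that is not actually an obstacle: the cap is by definition the arc-to-arc interaction data only, and circle-to-arc interactions are carried by the original $3$-face coloring of the state, so deleting circles cannot change which $P\in\mathcal{P}_k$ the state lands in. Similarly your claim that twists ``can be absorbed without changing the interaction pattern'' needs care --- a buckle between two arcs in $D_\beta$ does contribute a diamond interaction --- though the looser point you intend (clasp versus buckle imposes the same coloring constraint, so only the presence of an interaction matters) is correct. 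With the planarity argument corrected and the simplification steps made explicit, your plan coincides with the paper's proof.
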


The rest of this section describes the map $cap$ and shows that it is well-defined. We will work with $\mathcal{P}_6$, which is the case for the Birkhoff diamond, but the same proof works for any $k>1$. 

\begin{example} In \Cref{fig:BirkhoffExampleColor1}, the cap of the state graph of the reducer is $B$ from \Cref{fig:Basic-cap-and-planar-cap}.
\end{example}  

\begin{proof}
Given a state $\Gamma_\alpha$, the circles of $\Gamma_\alpha$ can interact at an edge in two ways: (1) two arcs can be adjacent along an edge but otherwise embedded in the plane (no twist in the band), or (2) two arcs can be adjacent along an edge and intersect each other (there is a half-twist).  See the lefthand side of \Cref{fig:ArcInteractions}.  

\begin{figure}[H]
\includegraphics[scale=0.35]{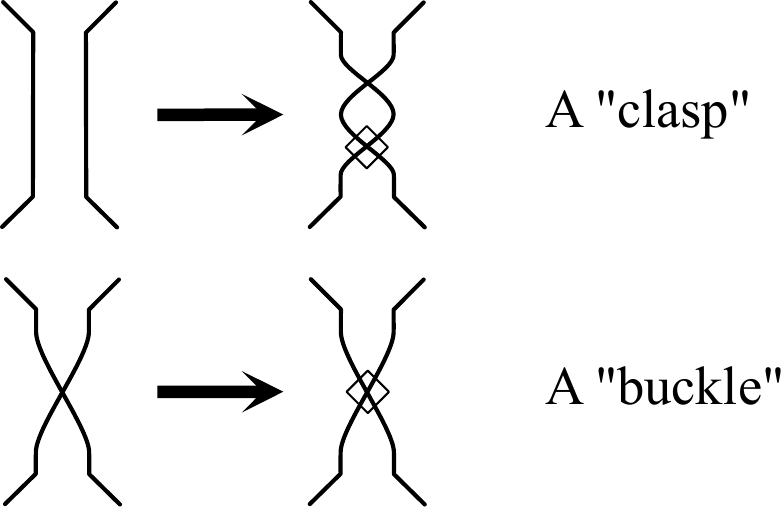}
\caption{To indicate that two circles interact along an edge, insert a diamond symbol as shown to indicate a {\em clasp} or {\em buckle}.}
\label{fig:ArcInteractions}
\end{figure}

When two arcs (and the circles they are part of) interact along an edge as in (1),  place a ``clasp'' in the diagram to indicate the interaction. Similarly, place a ``buckle'' if the two arcs intersect as in (2). We place a diamond symbol on one of the virtual crossings to indicate that the two circles must be painted different colorings when $n$-face coloring the state.  See the righthand side of \Cref{fig:ArcInteractions}. The picture on the left of \Cref{fig:SimplifiedStates} is an example of the state graph in \Cref{fig:BirkhoffExampleColor1} with clasp and buckle interactions. 

\begin{figure}[H]
\includegraphics[scale=0.4]{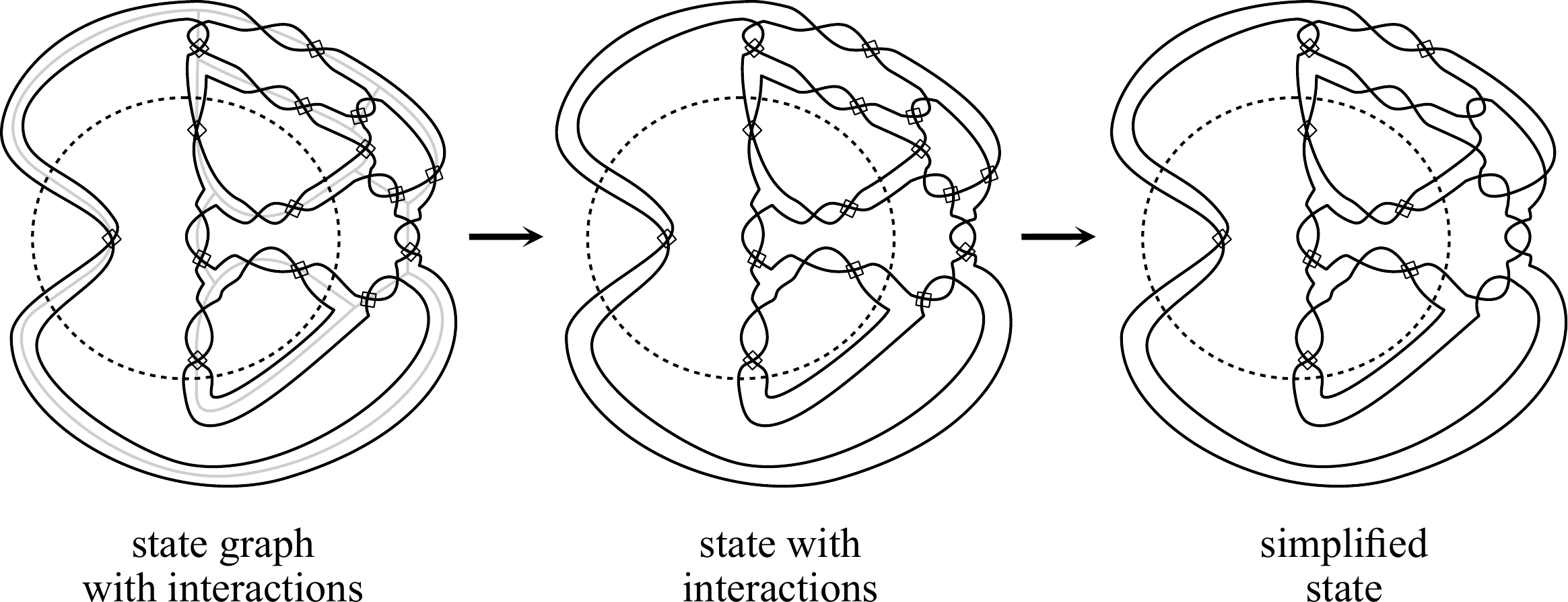}
\caption{A picture of the state graph of the Birkhoff reducer shown in \Cref{fig:BirkhoffExampleColor1} with interactions and its simplification.}
\label{fig:SimplifiedStates}
\end{figure}

With clasps and buckles inserted into the state graph, the underlying  graph structure of the immersed circles is no longer needed for determining proper colorings and can be deleted (see the middle picture of \Cref{fig:SimplifiedStates}).  This means, analogous to what is done in knot theory (cf. \cite{BKR}), that the immersed set of circles in a state can be  simplified. For example, if two circles  have more than two interacting diamond symbols, all but one of the diamond symbols can be removed (see the simplified state picture of \Cref{fig:SimplifiedStates}). The remaining virtual crossings can then be used to simplify the diagram, often drastically (see the cap in \Cref{fig:TakingTheCap} below). If one circle has a self-interacting diamond symbol with itself (a topological bridge), then that state cannot support any proper colorings and is not included.  A version of using the diamond symbol for a special type of interaction was introduced in \cite{Kauffman}, see also \cite{BKM}. It was generalized to multi-virtual crossings in \cite{Kauffman4}.

Removing the extra diamonds and simplifying a state using virtual crossing  transforms the relative state $D_\beta$ outside of the configuration into a set of interacting circles and arcs (see the left picture of \Cref{fig:TakingTheCap}). Note: We can also remove the diamonds on each of the spokes of $D_\beta$ since we have taken the spokes to be part of the state $C_{\beta'}$ of the configuration $C$, so different states of $C$ will capture those interactions.  Recall from the beginning of \Cref{section:taking-the-cap-of-a-state} that the circles (faces) in $D_\beta$ can be ignored for the purpose of state-extending a coloring.  After removing these circles from $D_\beta$, only the arcs and a small number of diamond interactions remain. The new diagram can usually be simplified even more (see the right picture of \Cref{fig:TakingTheCap} after the highlighted circle in $D_\beta$ is removed).  

\begin{figure}[h]
\includegraphics[scale=0.40]{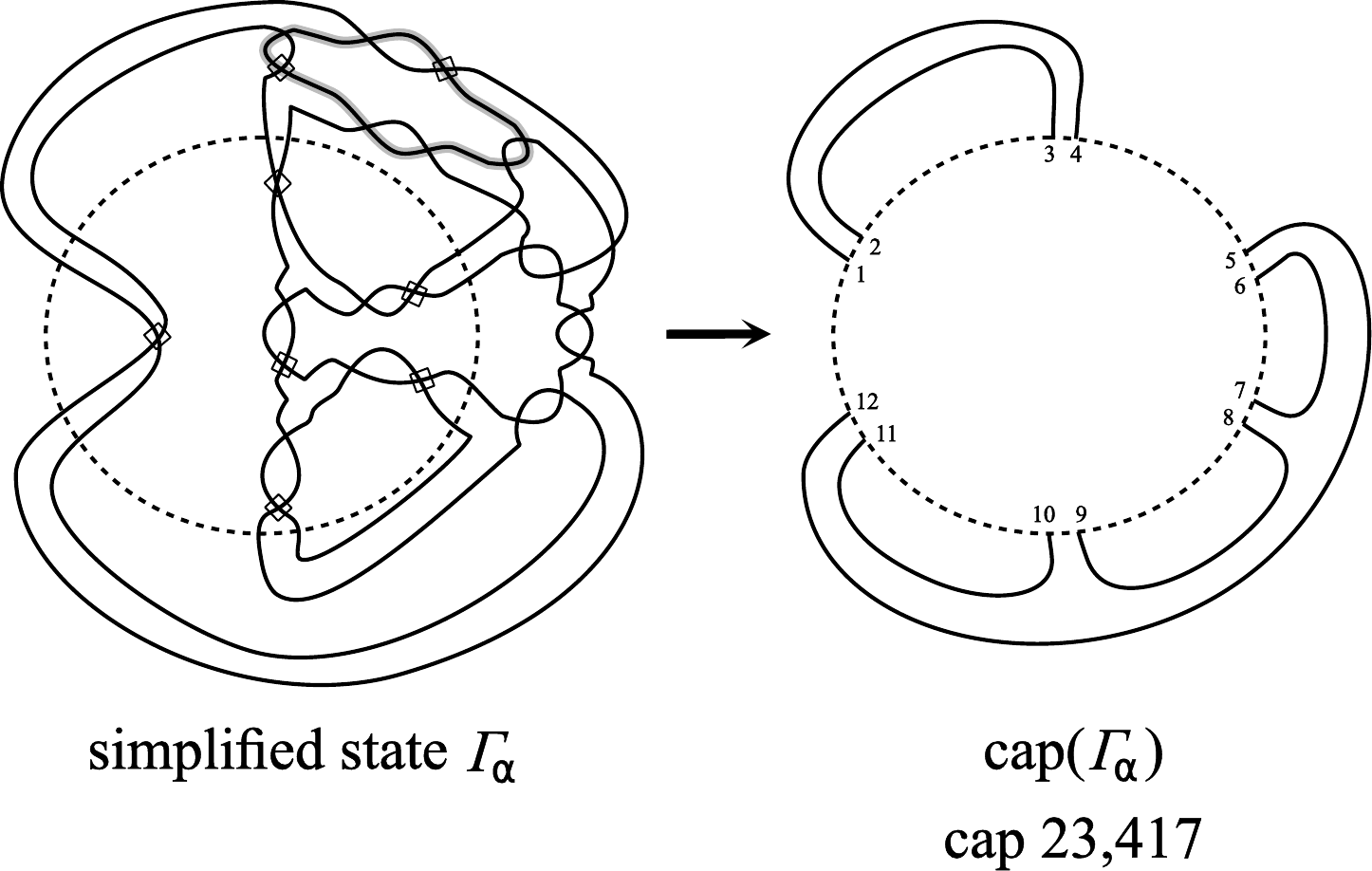}
\caption{The state from \Cref{fig:BirkhoffExampleColor1} and \Cref{fig:SimplifiedStates} simplified to a cap.}
\label{fig:TakingTheCap}
\end{figure}

The result will be a set of immersed arcs that exit and then re-enter the configuration region, along with diamond interactions between different arcs that do not share a spoke. For example, the cap of the simplified state in   \Cref{fig:TakingTheCap} is just the basic cap $B$ from \Cref{fig:Basic-cap-and-planar-cap}, \Cref{fig:connect-sum-process}, and \Cref{fig:ThreeColorOnBirkhoffReducer}.  A slightly more complicated example is a state whose cap looks like the planar cap of the basic cap $B'$ in \Cref{fig:Basic-cap-and-planar-cap}. In this case, the cap with diamond interactions would look like the picture in \Cref{fig:Cap-wth-diamonds}. Notice that we do not need to put diamonds on arcs that intersect if the arcs share a spoke. This helps reduce the number of caps needed in the code described in \Cref{section:main-computation} below.

\begin{figure}[H]
\includegraphics[scale=0.35]{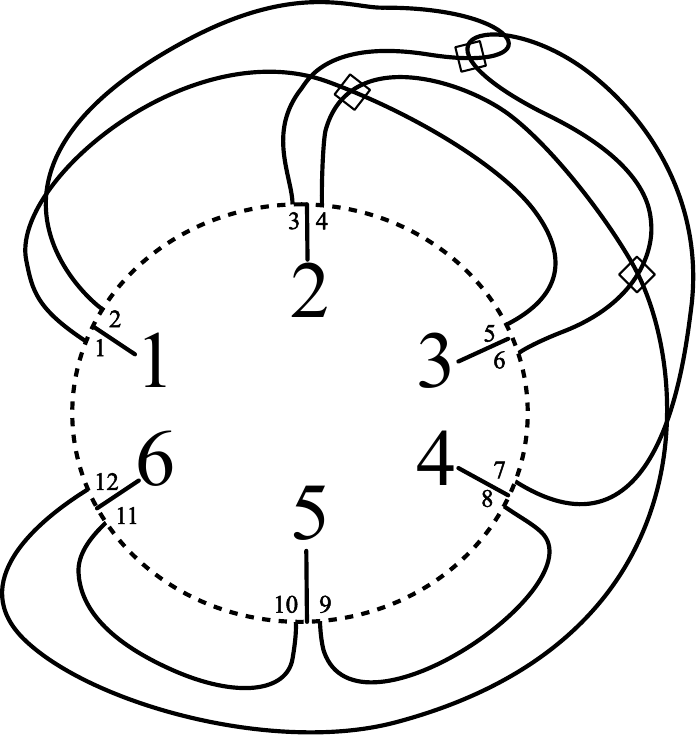}
\caption{What the planar cap of \Cref{fig:Basic-cap-and-planar-cap} looks like with the diamond interactions inserted.}
\label{fig:Cap-wth-diamonds}
\end{figure}

This process always reduces a state to a set of arcs and their interactions (a planar cap), which proves the theorem.
\end{proof}

The data of a cap can be encoded for computation; we use \Cref{fig:Cap-wth-diamonds} to generate an example. First, label the spokes $1,2,\dots,k$ clockwise starting in the upper left part of the configuration region (this is  a choice we started following early in our coding and it quickly became the convention).  Next, label the arc endpoints on either side of the spoke following the same order $1,2,3,4\dots, 2k-1, 2k$. Label the arcs using $arc[a,b]$ where $a$ and $b$ are from this list of labels; generally take $a$ to be the smallest (but not required). Label the diamond interactions by $V[a,c]$ where $a$ is the smallest number in the arc $arc[a,b]$ and $c$ is the smallest number in the arc $arc[c,d]$. Therefore, the diamond interaction between $arc[2,3]$ and $arc[6,7]$ would be labeled $V[2,6]$. With these conventions set, the cap of \Cref{fig:Cap-wth-diamonds}  is represented as the product:
\begin{equation}\label{equation:cap-data}
arc[1,5] \cdot arc[2,3] \cdot arc[4,12] \cdot arc[6,7] \cdot arc[8,9] \cdot arc[10,11] \cdot V[1,4] \cdot V[2,6] \cdot V[4,6].
\end{equation}
Note:  we do not need to label interactions in the cap for arcs already adjacent at a spoke.

The basic cap for this cap can be realized by deleting $V[2,6]$ and $V[4,6]$. However, the $V[1,4]$ must remain for the cap to be planar. The basic cap of a planar cap can always be found by deleting diamond interactions in this manner. 

 \section{The code for proving \Cref{Theorem:Main}} 
\label{section:main-computation}

Given \Cref{theorem:cap-map} and the finiteness of $\mathcal{P}_6$ described in the last section, the proof of \Cref{Theorem:Main} is accomplished by checking, by computer, the following proposition.

\begin{proposition}\label{proposition:3-face-to-4-face-Birkhoff}
Let $BD$ be the Birkhoff diamond  and $R$ be its reducer  (see \Cref{fig:BirkhoffExampleColor0}). For all planar caps $P\in \mathcal{P}_6$ and relative states $R_\alpha$ of the reducer, if $(P\sharp R_\alpha, c')$ is an enhanced capped state with a $3$-face coloring $c'$, then there exists a relative state $BD_\beta$ such that $(P\sharp BD_\beta, c)$ is an enhanced capped state with a $4$-face coloring $c$. The $4$-face coloring $c|_P$ on the planar cap $P$ is identical to the $3$-face coloring $c'|_P$ except that at most two non-interacting arcs have been changed to the fourth color.
\end{proposition}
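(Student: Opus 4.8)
The plan is to prove \Cref{proposition:3-face-to-4-face-Birkhoff} by an exhaustive, but heavily pruned, computer search; the real content is organizing that search so that the enumeration is provably complete and each individual case is cheap to check. The outer loop runs over the finite set $\mathcal{P}_6$ of planar caps, whose finiteness is established in \Cref{section:taking-the-cap-of-a-state}, each cap encoded as a product of $arc[\cdot,\cdot]$ and $V[\cdot,\cdot]$ symbols as in \Cref{equation:cap-data}. For a fixed cap $P$ one enumerates the relative states $R_\alpha$ of the reducer $R$ (the $2^{|E(R)|}$ ways of half-twisting its edges), immediately discarding any $P\sharp R_\alpha$ containing a topological bridge, since such a state supports no proper coloring. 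For each surviving capped state $P\sharp R_\alpha$ one lists its proper $3$-face colorings $c'$ by backtracking on the constraint-satisfaction problem ``color the circles of $P\sharp R_\alpha$ with three colors so that circles sharing an edge of $R_\alpha$, a spoke, or a diamond interaction of $P$ receive different colors.'' Since $S_3$ permutes the three colors of any such coloring, it is enough to enumerate colorings in which the arc leaving spoke $1$ is red and the arc leaving spoke $2$ is blue, which removes the free $3! = 6$ factor.

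The inner loop carries out the $3$-to-$4$ step. Given an enhanced capped state $(P\sharp R_\alpha, c')$, one transfers $c'$ onto the six arcs of $P$ and then searches over the $2^{21}$ relative states $BD_\beta$ of the Birkhoff diamond (again discarding topological-bridge states). For each $BD_\beta$, trace the circles of $P\sharp BD_\beta$ with a curve-following/union-find routine and attempt to color them: a circle meeting arcs of exactly one color from $\{\text{red},\text{blue},\text{green}\}$ inherits that color; a circle forced to merge arcs of two different colors is a candidate to be painted the fourth color; circles lying entirely inside $BD_\beta$ are colored greedily; circles lying entirely in $P$ keep their color from $c'$. One then checks that the resulting assignment is a proper $4$-face coloring of $P\sharp BD_\beta$ and, crucially, that at most two arcs of $P$ have changed color and that those arcs do not interact. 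As soon as one $BD_\beta$ passes we record success for $(P,R_\alpha,c')$ and move to the next triple; only existence of an extending state is needed. The proposition holds precisely because every triple succeeds.

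Two facts must be argued by hand so that the run actually proves the proposition (and hence, through \Cref{theorem:cap-map}, \Cref{theorem:4-to-3-face-coloring}, and \Cref{theorem:4-face-to-4-face}, proves \Cref{Theorem:Main}). First, passing to caps loses nothing: by \Cref{theorem:cap-map} every bridgeless state graph of the reducer-containing graph maps to some $P\in\mathcal{P}_6$, and a $3$-face coloring on it restricts, on the arcs together with the reducer region, to a $3$-face coloring of $P\sharp R_\alpha$ for the corresponding relative state $R_\alpha$; the circles deleted in forming the cap are untouched by the extension, so they glue back without any new conflict and, as stressed in \Cref{section:taking-the-cap-of-a-state}, without any Kempe switch. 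Second, the symmetry reduction is valid because $S_3$ acts simply transitively on ordered pairs of distinct colors, so fixing the colors at spokes $1$ and $2$ selects exactly one representative per orbit, and an extension of the representative extends every member of its orbit by the same permutation of colors.

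The main obstacle is simply the size of the search: the product of $|\mathcal{P}_6|$, the $2^{|E(R)|}$ reducer states, the lists of $3$-face colorings, and the $2^{21}$ Birkhoff-diamond states is large, and the run becomes feasible only with the prunings above (topological-bridge elimination, the factor-$6$ symmetry cut, and early termination at the first successful $BD_\beta$) together with efficient curve-tracing and conflict-checking data structures, as in \Cref{section:main-computation}. A more delicate, though smaller, worry is the correctness of the enumeration of $\mathcal{P}_6$ itself: a single missing planar cap would break the completeness argument, so this list ought to be generated two independent ways --- directly from the combinatorics of six arcs exiting and re-entering a disk, and by applying the $cap$ map of \Cref{theorem:cap-map} to a large sample of states of concrete graphs (including the ones in \Cref{fig:BirkhoffExampleColor1} and \Cref{fig:BirkhoffExampleColor2}) --- and the two lists compared for agreement.
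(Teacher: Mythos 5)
Your proposal matches the paper's proof essentially step for step: enumerate the finite set $\mathcal{P}_6$ of planar caps, glue in the reducer's relative states and list proper $3$-face colorings with the factor-$6$ symmetry cut (spoke $1$ red, spoke $2$ blue), then search the $2^{21}$ relative states of the Birkhoff diamond for a state whose circles admit a proper $4$-face coloring changing at most two non-interacting arcs to yellow, discarding topological-bridge states throughout. The paper organizes this into three Mathematica notebooks (producing $130$ basic caps, $34{,}179$ planar caps, $3{,}744$ colorable caps and $7{,}210$ colored caps before the $2^{21}$ search), and your closing remark about verifying the completeness of the cap list by an independent generation method mirrors the authors' own reliance on multiple rewrites of the code yielding identical output; the logical content is the same.
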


In this section, we describe the code for the following steps that check \Cref{proposition:3-face-to-4-face-Birkhoff}:
\begin{enumerate}
\item generate all $34,179$ planar caps from $130$ basic caps (\Cref{sec:generating-the-caps}), 
\item insert the $2^6=64$ relative states $R_\alpha$ of the reducer into each planar cap to find $3,744$ planar caps that support a $3$-face coloring  for a total of $7,210$ colored planar caps $(P,c'|_P)$ (\Cref{sec:coloring-the-caps}), and
\item produce, for each colored cap $(P,c'|_P)$, a relative state $BD_\beta$ from the possible $2^{21}=2,097,152$ relative states of the Birkhoff diamond such that $P\sharp BD_\beta$ can be $4$-colored in accordance with the proposition above (\Cref{section:results}). 
\end{enumerate}

Each step above is encoded in a Mathematica notebook starting with ``ProofCodeX,'' where ``X'' is step $1,2,$ or $3$. The three  notebooks can be found as ancillary files to the arXiv version of this paper and will be referred to as the ``first, second, and third notebook.''

\begin{remark} The three steps above for \Cref{proposition:3-face-to-4-face-Birkhoff} can be programmed for the $632$ other configurations and reducers (not just the Birkhoff diamond). Thus, this gives an alternate way to verify the correctness of reducibility in the proof  of the four color theorem.
\end{remark}

\subsection{Generating the planar caps}\label{sec:generating-the-caps}
We will refer to the list of all possible caps for the Birkhoff diamond (and its reducer) as PlanarSixCaps.  The Mathematica file \emph{ProofCode1-6CapGenerator.nb} results in a single output file called \emph{PlanarSixCaps.mx}.  Each cap will consist of 6 arcs, each labeled $arc[a,b]$ as described above, and some number of interactions, each labeled $V[a,b]$.  To generate the list, we will proceed in four basic steps:
\begin{enumerate}
\item Enumerate the ``pre-caps" (described below).
\item For each pre-cap we generate a basic cap with a minimal set of interactions.
\item For each basic cap, we generate a cap for each possible configuration of interactions between the cap arcs.
\item Sift out any non-planar caps that arise from this process to get the set of planar caps.
\end{enumerate}

Initially, we number the six perfect matching edges, the  spokes, at the boundary of the Birkhoff diamond region from one to six.  After this identification, a pre-cap is a fixed-point free permutation of the six perfect matching edges (see \Cref{fig:PreCap}).  The cycles of the permutation determine a set of six arcs that connect the spokes.

\begin{figure}[H]
\includegraphics[scale=0.45]{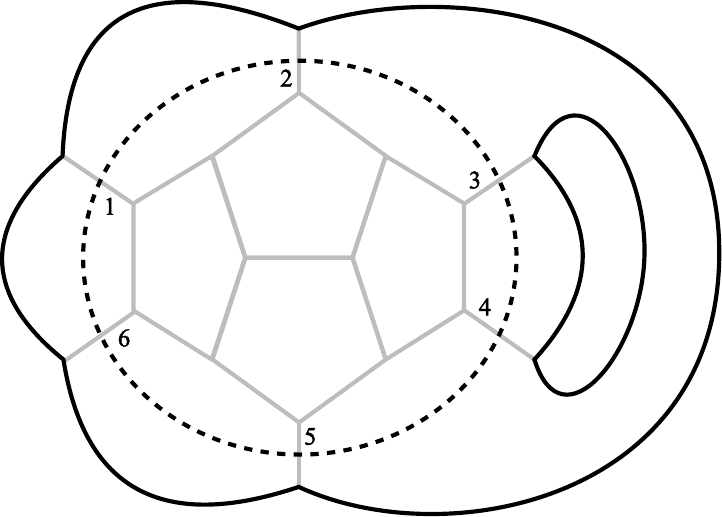}
\caption{A pre-cap described by permutation $\sigma=(1256)(34)$.}
\label{fig:PreCap}
\end{figure}

For the pre-cap shown in \Cref{fig:PreCap}, the corresponding permutation is given by $\sigma = (1 2 5 6) (3 4)$.  By default, Mathematica generates the permutation as a vector:  $\sigma = \{2, 5, 4, 3, 6, 1\}$.  For each permutation, $\sigma$, convert it to a list of ordered pairs, $$\{ \{1,\sigma(1)\},\{2,\sigma(2)\},\{3,\sigma(3)\},\{4,\sigma(4)\},\{5,\sigma(5)\},\{6,\sigma(6)\}\}.$$  Sift out any permutation with a fixed point, which corresponds to an arc that exits and re-enters at the same spoke and therefore cannot be given any proper coloring (a topological bridge).  This is done by searching each permutation for an ordered pair of the form $\{a,a\}$, and if one is found, the permutation is set to zero.  Finally, convert each list of ordered pairs to a product of arcs via the rule $\{a,b\} \mapsto arc [a,b]$.  Duplicates are removed, along with all zeroes, and the list is sorted. 

For the second step, we convert the pre-cap to a basic cap.  Since each arc may enter the Birkhoff diamond region along a clasp or a buckle, there are $2^6$ possible ways to obtain a cap from a pre-cap.  However, only one such cap is needed for each pre-cap since our calculations will consider all possible states of the Birkhoff diamond or its reducer, which include these $2^6$ possible ways.  

The example in \Cref{fig:PreCapToCap} can be used to show how to covert a pre-cap into a basic cap. First, it has two cycles: $$\left(arc[1,2]arc[2,5]arc[5,6]arc[6,1]\right)\cdot \left(arc[3,4]^2\right).$$
The strategy is to alternately double each arc label, or double and subtract one, following the order determined by the cycle.  That is, for a four-cycle we apply the rule:
$$arc[a,b]arc[b,c]arc[c,d]arc[d,a] \mapsto arc[2a,2b-1]arc[2b,2c-1]arc[2c,2d-1]arc[2d,2a-1].$$  Two-cycles and three-cycles are handled similarly.  For the four-cycle above, this results in $arc[2,3]arc[4,9]arc[10,11]arc[12,1]$ and the two-cycle becomes $arc[6,7]arc[8,5]$.   Arc data is treated as orderless, but after sorting, the arc data is stored as 
$$arc[1,12]arc[2,3]arc[4,9]arc[5,8]arc[6,7]arc[10,11].$$  
For each cap in which planarity requires two arcs intersect, for example $arc[a,b]$ and $arc[c,d]$ with $a<c<b<d$, then an arc interaction $V[a,c]$ is inserted to make it planar. At this point,  this process generates 130 basic caps, each with a minimum set of of interactions. The example on the right in \Cref{fig:PreCapToCap} is number 91 in this list.

\begin{figure}[H]
\includegraphics[scale=0.45]{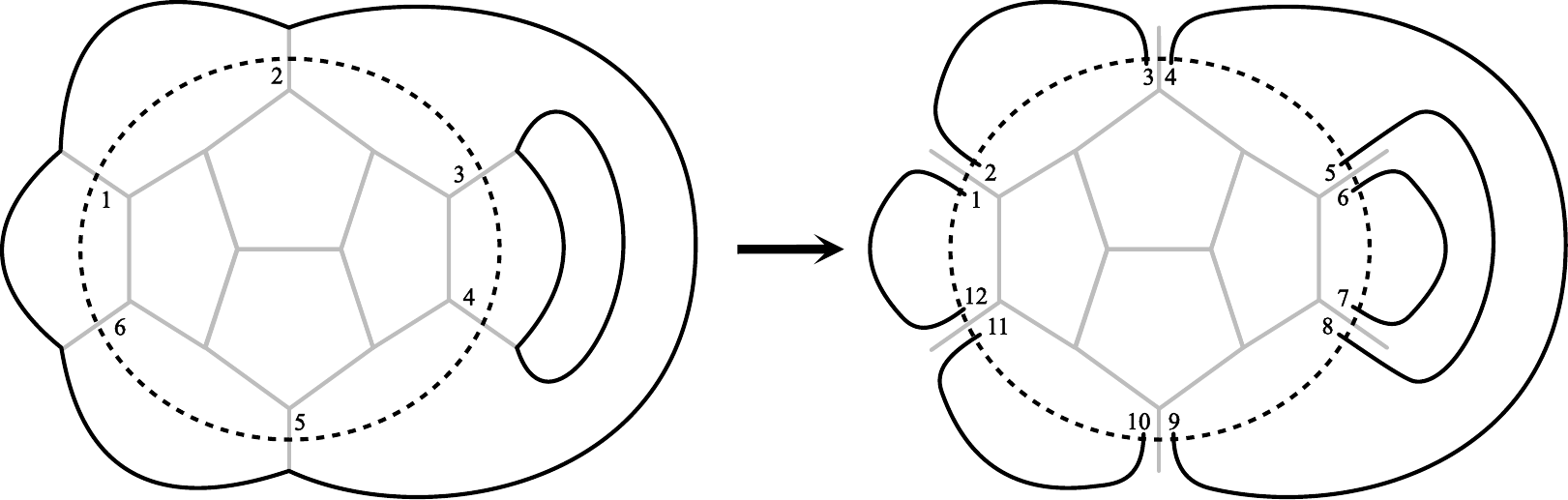}
\caption{ Converting a pre-cap to basic cap 91.}
\label{fig:PreCapToCap}
\end{figure}

For the third step, every possible set of arc interactions is generated, even those that contain an interaction at a spoke or cannot be realized by a planar graph. This is more than needed but simplifies the code and ensures that all possible interactions are attained. First, index the six arcs in a cap by the set $\{1,2,3,4,5,6\}$ and generate the $Comb(6,2)=15$ ordered pairs $\{(1,2), (1,3),\dots, (5,6)\}$. To get all possible interactions, take  all possible subsets of this set where the empty set corresponds to the basic cap.

For each ordered pair, $(i,j)$, in a subset, let $a$ the first label of arc $i$ and let $b$ be the first label of arc $j$ in the basic cap. (Due to how Mathematica sorts, these are always the minimum number in the arc.)  Such an interaction is notated by $V[a,b]$. It represents a buckle or clasp between arcs with an $a$ and $b$ in them (cf. $V[2,6]$ in \Cref{equation:cap-data} of \Cref{fig:Cap-wth-diamonds}).  

For each basic cap generated in step 1, we multiply the basic cap by the product of interactions, one for each of the $2^{15}=32768$ possible subsets of interactions.  Most of these interactions include an interaction at a spoke, which do not need to be considered (as in step 1), and can be zeroed out.  There will also be  duplicate interactions because many basic caps (like basic cap $B'$ in \Cref{fig:Basic-cap-and-planar-cap}) already have interacting arcs. Thus, we can delete zeros and duplicate caps to pare  down the list to $44,707$ caps.

For the fourth step, this list of caps contains all possible planar caps, but it also contains some caps which can only occur in a state for a non-planar graph.  For the cap shown in \Cref{fig:PreCapToCap} we could add an interaction between $arc[1,12]$ and $arc[5,8]$ by multiplying basic cap 91 by $V[1,5]$ to obtain:
\begin{equation}
arc[1,12]arc[2,3]arc[4,9]arc[5,8]arc[6,7]arc[10,11]V[1,5].
\label{eqn:nonPlanarCap}
\end{equation}
However, because $arc[4,9]$ separates $arc[1,12]$ from $arc[5,8]$ this cap cannot occur in any state for a planar graph.  The reason is that for a planar graph every crossing in the cap has to come from either a buckle or a clasp.  To add the interaction $V[1,5]$ one must also multiply by $V[1,4]$ or $V[4,5]$.  After doing so, the resulting cap may occur in a state for a planar graph (see \Cref{fig:TwoPlanarCaps}).  Our final step is to zero out and remove any caps where neither of these necessary additional interactions are present.

\begin{figure}[H]
\includegraphics[scale=0.45]{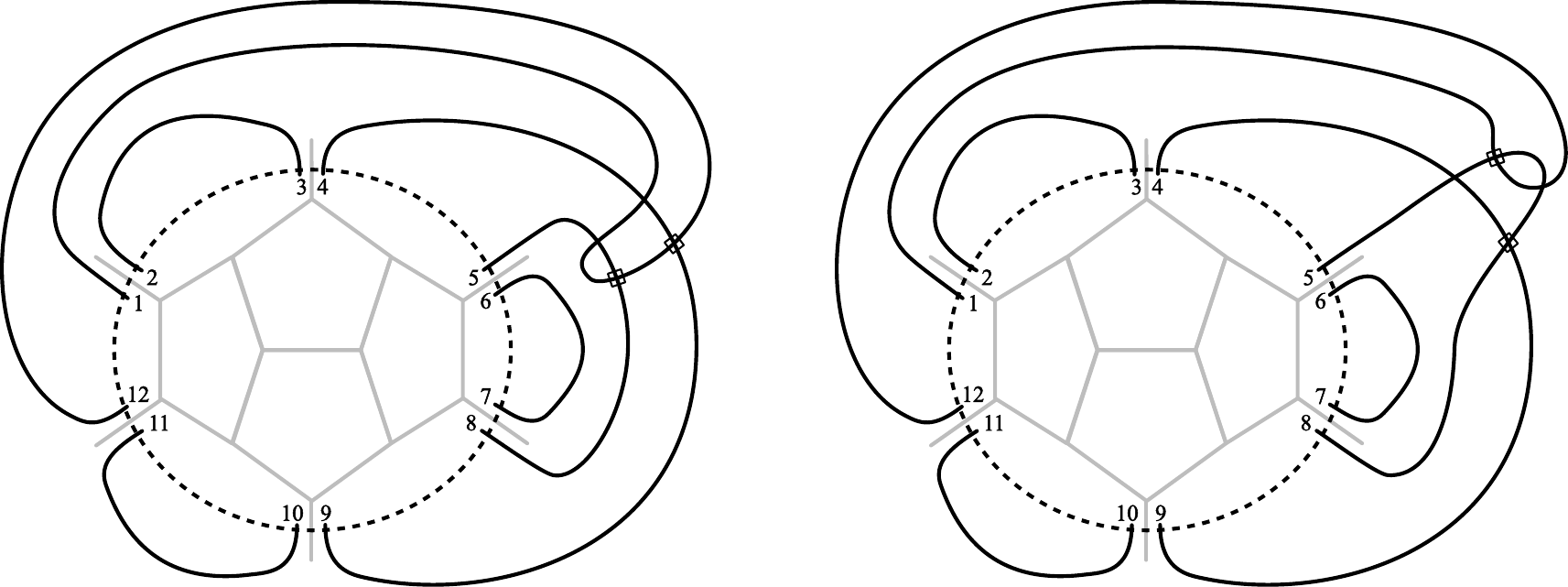}
\caption{Two planar caps for $V[1,4]$ and $V[4,5]$ for basic cap number 91.}
\label{fig:TwoPlanarCaps}
\end{figure}

The  result obtained by this Mathematica notebook is a list of every possible planar cap that can occur as the cap of a state graph without a topological bridge, outputted as a compressed dataset  in \emph{PlanarSixCaps.mx}. This file is included in the ancillary files and can be imported with an {\em Import} command.  This process results in a total of 34179 caps.  The two caps shown in \Cref{fig:TwoPlanarCaps} are caps $18835$ and $18840$, respectively.

\subsection{Coloring the caps}\label{sec:coloring-the-caps}
The next task, which is the output of the second notebook \emph{ProofCode2-ColoredCapsOnReducer.nb},  is to produce a list of all possible $3$-face colorings of the planar caps that are compatible with a $3$-face coloring on the Birkhoff reducer. To do this, we modify the arc notation used in the previous section to generate the planar six caps to include coloring information.  This modification is given by
$$arc[a,b] \mapsto arc[c,A[a,b]].$$
The color is indicated by $c$.  Initially, $c$ is given a value of $0$, which is thought of as an uncolored arc.  When assigning one of three colors to each arc, we will use a value of $1, 2$ or $3$, which will be interpreted as red, blue or green, respectively.  The original arc labels are encoded by $A[a,b]$.  

To reduce the number of colored caps, we make the assumption that the two arcs of the form $arc[c_1,A[1,a]]$ and $arc[c_2,A[2,b]]$ are always colored red ($c_1=1$) and blue ($c_2=2$).  For each planar six cap, we color the remaining four arcs in all possible ways and sift out any colorings that are incompatible with the Birkhoff reducer.  The second notebook has detailed explanation of the steps in this process. 

While there are $34179$ planar caps, only $14602$ of them have a $3$-face coloring on the cap (cf. \Cref{fig:ThreeColorOnBirkhoffReducer} and the text above it).  Of these, $3744$ have at least one coloring that can be extended to a $3$-face coloring on some relative state of the reducer (see the lefthand picture of \Cref{fig:ThreeColorOnBirkhoffReducer}, for example). However, for each of these caps, there may be multiple colorings that extend, even after fixing the first and second arc to be red and blue. The total number of $3$-face colorings that extend to a relative state on the reducer is $7210$. This resulting list is exported as \emph{ReducerColoredCaps.mx}.

\subsection{The Results}\label{section:results}
The third and final  notebook is titled \emph{ProofCode3-ColoringTable.nb}.  For each of the colored caps in the file \emph{ReducerColoredCaps.mx}, a $4$-face coloring of some state of the Birkhoff diamond is exhibited, which completes the proof of \Cref{proposition:3-face-to-4-face-Birkhoff}. 

The process by which we obtain the $4$-face colorings is like how we extend the colorings through the Birkhoff reducer in the second notebook, except this time we allow for the ability to change one or two arcs (and circles or arcs of the relative state of the Birkhoff configuration) into a fourth color.  Since this last part of the process is an existence result, we do not provide the code that finds each $4$-face coloring. However, we do provide the list and a way for the reader to check that each is a valid $4$-face coloring.

To present the results in table format, the colorings are imported from a compressed dataset called \emph{BirkhoffResults.mx} along with the colored caps from the file \emph{ReducerColoredCaps.mx}.  The data is then combined and outputted to a table. See \Cref{appendix:table-of-results} for a description of how to read the table and build a picture of any capped state from an entry in the table. There are also two rules, \emph{checkColor} and \emph{checkCapMatch}, available in the third notebook. The first that checks the validity of a coloring of a capped state of the Birkhoff diamond. The second checks that the cap and $3$-face coloring of the reducer corresponds to the cap and $4$-face coloring of the capped state of the Birkhoff diamond. Running these functions on the data from  \emph{BirkhoffResults.mx} (as shown in the code) verifies the proof of \Cref{proposition:3-face-to-4-face-Birkhoff}.

\section*{Acknowledgements} The three Mathematica files included as ancillary files in the arXiv version of this paper run on a reasonably fast laptop in under one hour.  However, this is a vastly simplified, compact version of our original code.  It has been rewritten many times using  different Mathematica functions to improve the efficiency and runtime.\footnote{The fact that the code has been rewritten many times is an important point: We are confident this code is free from computer language errors (due to the way, say, Mathematica may behave unexpectedly in implementing one of its core functions) because each rewrite of the code using different techniques and different Mathematica functions produced the {\em exact} same results.}  The first few iterations of this code often required a supercomputer to run in a reasonable amount of time (days instead of  months).  Plus, we had to test many configurations to discover the Birkhoff reducer in \Cref{Theorem:Main}. This  required having access to a supercomputer.

We are grateful to the Louisiana State University Mathematics Department for providing unfettered access to its own high-end computational system. LSU has much larger, university-wide, supercomputers, but access to them is strictly regulated. It was only by having the freedom to quickly test and experiment with code and examples on a fast system that made this paper possible. For example, at one point during our testing, we used seven of its nodes with 300 CPU cores and about four terabytes of its total memory running almost continuously over a five day period. We would also like to thank Alex Perlis and Nikkos Svoboda of the LSU mathematics department for their expertise in running and maintaining the system and for providing advice on how to improve our code.

\appendix
\section{Conventions used in the Table of Results}\label{appendix:table-of-results}

A small sample of the table of results created by the notebook \emph{ProofCode3-ColoringTable.nb} is given below in \Cref{tab:results}.  The table has four columns:  Column $1$ is the number of the colored cap (from $1$ to $3744$).  Column $2$ specifies the coloring of the cap.  Column $3$ gives the arc data for the colored cap.  Finally, column $4$ specifies a $4$-face coloring of the capped state of the Birkhoff diamond.  The notation for this coloring requires  further explanation.

Consider the $4$-face coloring of the capped state for $(47,3)$ (the last row shown in the table).  The first arc is given by $arc[1, A[4, 9, 16, 21, 28, 38, 40]]$.  The first number, $1$, indicates red, just as it does in the colored caps. By convention, in addition to labeling the sides of each spoke with a number $1$ through $12$, we label each edge of the blowup cycles with a number as shown in \Cref{fig:ArcNotationCircles}.  The list of numbers $A[4, 9, 16, 21, 28, 38, 40]$ represents a collection of arcs that, when joined and combined with the arc in the cap, make up a circle (see \Cref{fig:ArcNotationCircles}). 

\begin{figure}[H]
\includegraphics[scale=0.6]{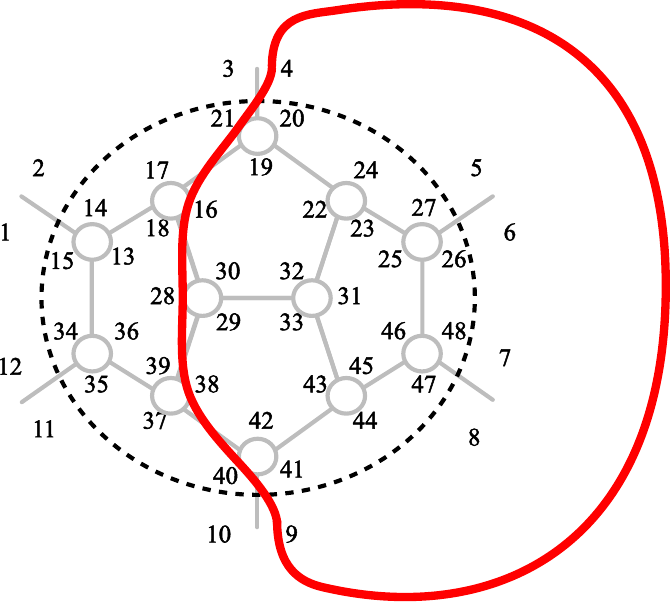}
\caption{Building a circle from arc data.}
\label{fig:ArcNotationCircles}
\end{figure}

\begin{figure}[H]
\includegraphics[scale=0.45]{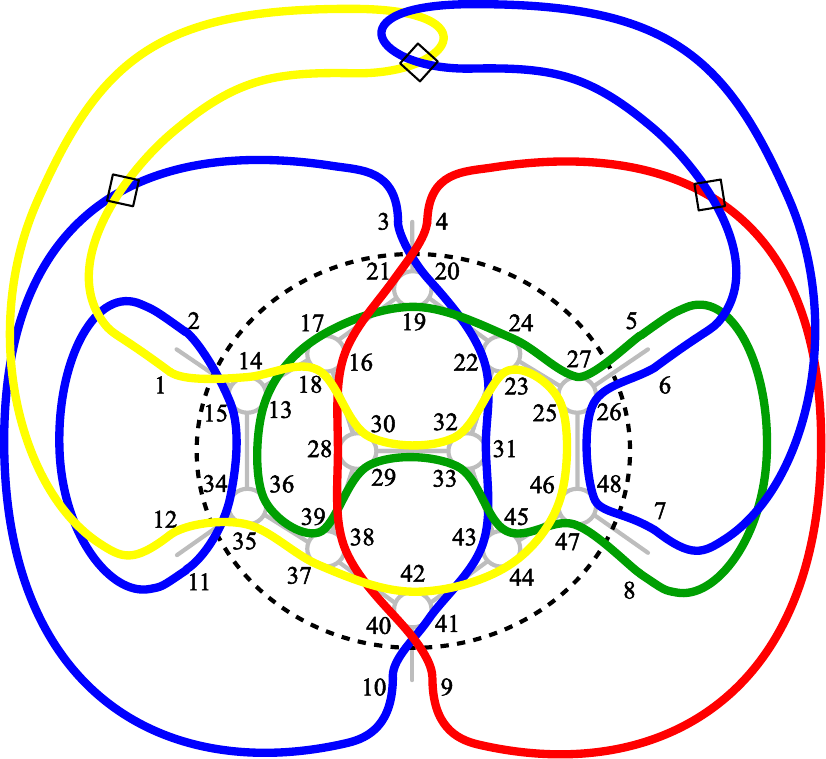}
\caption{Producing a $4$-face colored state from a colored cap.}
\label{fig:ColoredCap2State}
\end{figure}
\mbox{}\\

For the colored cap $(47,3)$ of the reducer, \Cref{fig:ColoredCap2State} shows a $4$-face coloring of a capped state of the Birkhoff diamond.  The colored cap can be seen in the colored arcs outside the dotted region, except that the yellow arc was originally red in the cap (i.e. $arc[1, A[1, 12]]$). 

\begin{table}[H] \small
\begin{center}
\caption{Format of Coloring Result Table}
\label{tab:results}
\begin{tabular}{|c|c|p{55mm}|p{83mm}|}
\hline
Cap & No. & Colored Cap &  $4$-Face Colored State\\
\hline
$1$ & $1$ & arc[1, A[1, 12]] arc[1, A[3, 10]] arc[1, A[5, 8]] arc[2, 
  A[2, 11]] arc[2, A[4, 9]] arc[3, A[6, 7]] &  arc[1, A[5, 8, 26, 48]] arc[1, A[3, 10, 20, 22, 31, 41, 43]] arc[2, 
  A[2, 11, 15, 34]] arc[2, A[4, 9, 16, 21, 28, 38, 40]] arc[3, 
  A[6, 7, 13, 17, 19, 24, 27, 29, 33, 36, 39, 45, 47]] arc[4, 
  A[1, 12, 14, 18, 23, 25, 30, 32, 35, 37, 42, 44, 46]] V[1, 2] V[1, 
  3] V[1, 4] V[1, 5] V[1, 6] V[2, 6] V[3, 4] V[3, 6] V[4, 6] V[5, 6]\\
 
 \hline
 
1 & 2 & arc[1, A[1, 12]] arc[1, A[3, 10]] arc[2, A[2, 11]] arc[2, 
  A[4, 9]] arc[2, A[5, 8]] arc[3, A[6, 7]] &  arc[1, A[3, 10, 20, 22, 31, 41, 43]] arc[2, A[2, 11, 15, 34]] arc[2, 
  A[5, 8, 26, 48]] arc[2, A[4, 9, 16, 21, 28, 38, 40]] arc[3, 
  A[6, 7, 13, 17, 19, 24, 27, 29, 33, 36, 39, 45, 47]] arc[4, 
  A[1, 12, 14, 18, 23, 25, 30, 32, 35, 37, 42, 44, 46]] V[1, 2] V[1, 
  3] V[1, 4] V[1, 5] V[1, 6] V[2, 6] V[3, 4] V[3, 6] V[4, 6] V[5, 6]\\
  
 \hline

\vdots &\vdots & \vdots & \vdots\\

\hline

47 & 2 & arc[1, A[1, 12]] arc[1, A[4, 9]] arc[2, A[2, 11]] arc[2,  A[3, 10]] arc[2, A[5, 8]] arc[3, A[6, 7]] V[1, 3] V[1, 6] V[4, 6] & arc[1, A[4, 9, 16, 21, 28, 38, 40]] arc[2, A[2, 11, 15, 34]] arc[2, A[5, 8, 26, 48]] arc[2, A[3, 10, 20, 22, 31, 41, 43]] arc[3, A[6, 7, 13, 17, 19, 24, 27, 29, 33, 36, 39, 45, 47]] arc[4,  A[1, 12, 14, 18, 23, 25, 30, 32, 35, 37, 42, 44, 46]] V[1, 2] V[1, 3] V[1, 4] V[1, 5] V[1, 6] V[2, 6] V[3, 4] V[3, 6] V[4, 6] V[5, 6]\\
 
\hline

47 & 3 & arc[1, A[1, 12]] arc[1, A[4, 9]] arc[2, A[2, 11]] arc[2, 
  A[3, 10]] arc[2, A[6, 7]] arc[3, A[5, 8]] V[1, 3] V[1, 6] V[4, 6] & arc[1, A[4, 9, 16, 21, 28, 38, 40]] arc[2, A[2, 11, 15, 34]] arc[2, 
  A[6, 7, 26, 48]] arc[2, A[3, 10, 20, 22, 31, 41, 43]] arc[3, 
  A[5, 8, 13, 17, 19, 24, 27, 29, 33, 36, 39, 45, 47]] arc[4, 
  A[1, 12, 14, 18, 23, 25, 30, 32, 35, 37, 42, 44, 46]] V[1, 2] V[1, 
  3] V[1, 4] V[1, 5] V[1, 6] V[2, 5] V[3, 4] V[3, 5] V[4, 5] V[4, 
  6] V[5, 6]\\
 
\hline

\vdots & \vdots & \vdots & \vdots \\

\hline

\end{tabular}
\end{center}
\end{table}

\end{document}